\theoremstyle{plain} \numberwithin{equation}{section}
\newtheorem{theorem}{Theorem}[section]
\newtheorem{conjecture}[theorem]{Conjecture}
\newtheorem{lemma}[theorem]{Lemma}
\newtheorem{proposition}[theorem]{Proposition}
\theoremstyle{definition}
\newtheorem{definition}[theorem]{Definition}
\newtheorem{example}[theorem]{Example}
\DeclareMathOperator{\LCM}{LCM}
\DeclareMathOperator{\mi}{mi}
\DeclareMathOperator{\atoms}{atoms}
\DeclareMathOperator{\lcm}{lcm}
\title{Constructing monomial ideals with a given minimal resolution}
\author{Sonja Mapes}
\address{255 Hurley Hall; University of Notre Dame; Notre Dame, IN
 46556}
\email{smapes1@nd.edu}
\author{Lindsay C. Piechnik}
\address{High Point University, High Point, NC , 27262}
\email{piechnik.hpu@gmail.com}
\date{September, 21, 2015}
\begin{document}

\maketitle
\begin{abstract}
This paper gives a description of various recent results which
construct monomial ideals with a given minimal free resolution.  
We show that these are all instances of coordinatizing a
finite atomic lattice as found in \cite{mapes}.  Subsequently, we 
explain how in some of these cases (\cite{Faridi}, \cite{Floystad1}), where questions still remain,
this point of view can be applied.  We also prove an equivalence in
the case of trees between the notion of \emph{maximal} defined in
\cite{Floystad1} and a notion of being maximal in a Betti stratum.  
\end{abstract}

\section{Introduction}

In recent years there have been a number of papers (see
\cite{velascoFrames}, \cite{Faridi}, \cite{Floystad1}, \cite{katthan},
\cite{IKM-F}, and
\cite{NPS} for examples) where the authors 
focus on constructing monomial ideals with a specified minimal
resolution, typically described as being supported on a specific
CW-complex via the construciton in \cite{BS}.  Many of these constructions can be interpreted as
``coordinatizing'' a finite atomic lattice via the construction found
in \cite{mapes}.  The main purpose of this paper is
to bring attention to this fact through three particular cases as
found in \cite{velascoFrames}, \cite{Faridi}, and \cite{Floystad1}.  

In particular, there are still a number of
unanswered questions motivated by the later two papers.  We believe
considering the structure of the correspondng lcm lattices  will help 
 answer some of these questions.  Specifically,
this is because the lcm lattice of a monomial ideal encodes important data
which is obscured by only considering the cell complex which supports the
resolution.  More generally, we believe that for many questions concerning monomial
ideals it can be important to consider this additional data.  For example in the recent work found in \cite{FMS}, the
authors give a characterization of which finite atomic lattices can be
the lcm lattices of monomial ideals with pure resolutions.  For their
work, it is significant that they are working in the context of the
lcm lattice rather than a cell complex that supports the resolution.

This paper is structured as follows.  
First, in Section \ref{prelims} we present the necessary background on finite 
atomic lattices and coordinatizations.
Proposition \ref{whenCoord} in Section \ref{CharCoord}
completes the characterization of coordinatizations found in Theorem
3.2 in \cite{mapes}.  It should be noted that, at present, equivalent results
have been proven indpendently in \cite{IKM-F}.  However, we include our
proof here for completeness since the language is consistent with that of
\cite{mapes}.  

Section \ref{nearlyScarfFaridi} gives a description of
the ``nearly Scarf'' construction of \cite{velascoFrames}, and the
``minimal squarefree'' construction of \cite{phan} as
coordinatizations.  This section then shows the construction
found in \cite{Faridi} is also an example of a coordinatization.
Further,  we offer  context for how the ideals in
\cite{Faridi} fit in with the ``nearly Scarf'' ideals and the
``minimal squarefree'' ideals, and how this can be useful for
considering the questions posed in \cite{Faridi}. 

Finally in Section \ref{floystad}, we give a reformulation of the ideas in
\cite{Floystad1} in terms of the underlying lcm lattice of these
ideals.  We also show that for trees, there is an explict
description of the finite atomic lattice and coordinatization that
yields the ideals constructed in \cite{Floystad1}.  Using this
description we are able to show that for trees the \emph{maximal}
ideals construted in \cite{Floystad1} are also maximal in their Betti
stratum.  Further, we 
discuss of how these ideals could be useful for
understanding minimal resolutions of monomial ideals in general.

\section{Preliminaries}\label{prelims}

\subsection{Lattices}
A \emph{lattice} is a set $(P, <)$  with an order relation $<$, which is transitive and antisymmetric satisfying the following properties:
\begin{enumerate}
\item $P$ has a maximum element denoted by $\hat{1}$
\item $P$ has a minimum element denoted by  $\hat{0}$
\item Every pair of elements $a$ and $b$ in $P$ has a join $a \vee b$, which is the least upper bound of the two elements
\item Every pair of elements $a$ and $b$ in $P$ has a meet $a \wedge b$, which is the greatest lower bound of the two elements.  
\end{enumerate}

We define an \emph{atom} of a lattice $P$ to be an element $x \in P$ such that $x$ covers $\hat{0}$ (i.e. $x > \hat{0}$ and there is no element $a$ such that $x > a > \hat{0}$). We will denote the set of atoms as $\atoms (P)$.  
\begin{definition}
If $P$ is a lattice and every element in $P -\{\hat{0}\}$ is the join of atoms, then $P$ is an \emph{atomic lattice}.  Further, if $P$ is finite, then it is a \emph{finite atomic lattice}. 
\end{definition}

Given a lattice $P$,  elements $x \in P$ are \emph{meet-irreducible} if $x \neq a \wedge b$ for any $ a > x, b>x$. 
The set of meet-irreducible elements in $P$ is denoted by  $\mathrm{mi}(P)$. 
Given an element $x \in P$, the \emph {order ideal} of $x$ is  the set $\lfloor {x} \rfloor = \{a \in P | a \leqslant x\}$.  
Similarly,  the \emph{filter} of $x$ is $\lceil {x} \rceil = \{a \in P
| x \leqslant a\}$.  We can also define \emph{intervals} (open and
closed, respectively) between two
elements $a$ and $b$ of $P$ as
follows: $(a,b) = \{c \in P \, \vert \, a < c < b\}$ and $[a,b]= \{c \in P \, \vert \, a \leq c \leq b\}$. 

There are two different simplicial complexes that one can associate to a finite atomic meet-semilattice $P$ 
(or any poset for that matter).  
One is the \emph{order complex}, $\Delta(P)$, which is the complex 
whose vertices correspond to elements of $P$ and  facets correspond to maximal chains of $P$.  
For finite atomic lattices, like those discussed here,
we also have (a specific instance of) the \emph{cross cut complex}, denoted  $\Gamma(P)$. 
 In $\Gamma(P)$, vertices correspond to atoms of $P$ and simplices correspond to subsets of atoms which have a join or meet in $P$.  It is known that $\Delta(P)$ is homotopy equivalent to $ \Gamma(P)$ \cite{bjorner}.

\subsection{Coordinatizations}\label{coord}

One of the main results (Theorem 5.1) of \cite{phan} is to show that
every finite atomic lattice is in fact the lcm lattice of a monomial
ideal.  This result was generalized by a modified construction in
\cite{mapes}, which also showed that with the modified
construction  all monomial ideals can be realized this way.  We
include a brief discription of this work here for
the convenience of the reader.

Define a {\it labeling} of a finite atomic lattice $P$ as any assignment of non-trivial monomials $\mathcal {M} = \{m_{p_1}, ..., m_{p_t}\}$ to some set of elements $p_i \in P$.  It will be convenient to think of unlabeled elements as having the label $1$. Define the monomial ideal $M_{\mathcal{M}}$ to be the ideal generated by monomials
\begin{equation} \label{IdealDef}
x(a) = \prod_{ p \in \lceil{a}\rceil ^ c} m_p 
\end{equation} for each $a \in \atoms (P)$ where $\lceil a \rceil^c$ means take the complement of $\lceil a \rceil$ in $P$.  We say that the labeling $\mathcal{M}$ is a {\it coordinatization} if the lcm lattice of $M_{\mathcal{M}}$ is isomorphic to   $P$. 

The following theorem, which is Theorem 3.2 in \cite{mapes}, gives a
criteria for when a labeling is a coordinatization.  

\begin{theorem}\label{coordinatizations}
Any labeling $\mathcal{M}$ of elements in a finite atomic lattice $P$ by monomials satisfying the following two conditions will yield a coordinatization of $P$.

\begin{enumerate}
\item[(C1)] If $p \in \mi (P)$ then $m_p \not = 1$.  (i.e. all meet-irreducibles are labeled)
\item[(C2)] If $\gcd (m_p, m_q) \not = 1$ for some $p, q \in P$ then $p$ and $q$ must be comparable.  (i.e. each variable only appears in monomials along one chain in $P$.)
\end{enumerate}
\end{theorem}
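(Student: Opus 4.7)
The plan is to construct the natural candidate lattice map
$\phi : P \to L_{M_{\mathcal{M}}}$ given by
$\phi(p) = \lcm\{x(a) : a \in \atoms(P),\ a \leq p\}$
(with the empty lcm equal to $1$), and then to show that (C2) forces $\phi$ to behave nicely on joins while (C1) forces it to be injective, so that $\phi$ is the desired lattice isomorphism.

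First I would reduce the analysis of $\phi$ to a single variable at a time using (C2). Fix a variable $v$ and let $P_v = \{q \in P : v \mid m_q\}$; by (C2) this is a chain in $P$. Define $\pi_v(p) = \min\{q \in P_v : q \geq p\}$, with the convention that $\pi_v(p) = \infty$ when no such $q$ exists. Unpacking the defining product $x(a) = \prod_{q \not\geq a} m_q$ gives $\deg_v(x(a)) = \sum_{q \in P_v,\, q < \pi_v(a)} \deg_v(m_q)$, and taking the max over atoms $a \leq p$ should yield the closed form
\[
\deg_v(\phi(p)) \;=\; \sum_{q \in P_v,\, q < \pi_v(p)} \deg_v(m_q).
\]
The key identity behind this is $\max_{a \leq p}\pi_v(a) = \pi_v(p)$, which follows because $\bigvee \pi_v(a) \geq \bigvee a = p$, and since $P_v$ is a chain the join $\bigvee \pi_v(a)$ itself lies in $P_v$. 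As a byproduct the same argument yields $\phi(\bigvee A) = \lcm\{x(a) : a \in A\}$ for any set of atoms $A$, so $\phi$ is a join-homomorphism and its image is all of $L_{M_{\mathcal{M}}}$.

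Next I would show that $p \leq p'$ if and only if $\phi(p) \mid \phi(p')$. The ``only if'' direction is immediate from the closed formula, since $p \leq p'$ forces $\pi_v(p) \leq \pi_v(p')$ for every $v$. The ``if'' direction is where (C1) is needed, and I would argue contrapositively: if $p \not\leq p'$, then using the standard fact that every element of a finite lattice is the meet of the meet-irreducibles above it, there exists $m \in \mi(P)$ with $m \geq p'$ and $m \not\geq p$. By (C1), $m_m \neq 1$, so choose a variable $v \mid m_m$; then $m \in P_v$, and chain comparability in $P_v$ together with $m \geq p'$ and $m \not\geq p$ pins $m$ down to satisfy $\pi_v(p') \leq m < \pi_v(p)$. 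Consequently $\deg_v(m_m)$ contributes to $\deg_v(\phi(p))$ but not to $\deg_v(\phi(p'))$, giving $\phi(p) \nmid \phi(p')$.

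The two steps together produce an order-preserving bijection between finite lattices, which is automatically a lattice isomorphism $P \cong L_{M_{\mathcal{M}}}$; a small additional check is that distinct atoms of $P$ map to incomparable elements of $L_{M_{\mathcal{M}}}$ (so that the $x(a)$ are a minimal generating set and atoms correspond to atoms), which is immediate from $\phi(a) = x(a)$. The main obstacle I anticipate is the bookkeeping for $\deg_v(\phi(p))$; once the max identity $\max_{a \leq p}\pi_v(a) = \pi_v(p)$ is secured, (C1) delivers injectivity and (C2) delivers surjectivity and join-preservation essentially by inspection.
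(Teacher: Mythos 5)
Your proposal is correct, and it follows the same strategy as the original argument this paper imports: the paper itself does not prove Theorem \ref{coordinatizations} but cites it from \cite{mapes}, where the isomorphism is likewise established by showing the map $p \mapsto \lcm\{x(a) : a \leq p\}$ is a join-preserving, order-reflecting bijection. Your two key steps check out: the identity $\max_{a \leq p}\pi_v(a) = \pi_v(p)$ is valid because $P$ is atomic (so $p = \bigvee\{a : a \leq p\}$) and (C2) makes $P_v$ a chain whose finite join is its maximum; and the order-reflection step correctly locates, via the standard fact that every element of a finite lattice is the meet of the meet-irreducibles above it, a witness $m \in \mi(P)$ with $m \geq p'$, $m \not\geq p$, whose (C1)-guaranteed nontrivial label forces $\phi(p) \nmid \phi(p')$ (and such a witness is automatically distinct from $\hat{1}$, so the irrelevance of any label on $\hat{1}$ causes no trouble).
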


Note that this theorem is not an ``if and only if'' statement, Section
\ref{CharCoord} addresses this issue.  A main ingreedient in Section
\ref{CharCoord} is the following discussion of deficit labelings from
\cite{mapes}.

To complete our introduction to coordinatizations, we  show that every monomial ideal is in fact a coordinatization of its lcm lattice.  Let $M$ be a monomial ideal with $n$ generators and let $L_M$ be its lcm lattice.  For notational purposes, let $L_M$ be the set consisting of elements denoted $l_{p}$ which represent the monomials occurring in $L_M$.  Now define the abstract finite atomic lattice $P$ where the elements in $P$ are formal symbols $p$ satisfying the relations $p < p'$ if and only if $l_{p} < l_{p}'$ in $L_M$.  In other words, $P$ is the abstract finite atomic lattice isomorphic to $L_M$ obtained by simply forgetting the data of the monomials in $L_M$.  Define a labeling of $P$ by letting $\mathcal{D}$ be the set consisting of monomials $m_p$ for each $p \in P$ where
\begin{equation}\label{deficitLabeling}
m_p =  \frac{\gcd \{ l_{t} \,| \, t > p\}}{l_{p}}.
\end{equation}
By convention $\gcd \{ l_{t} \,| \, t > p\}$ for $p = \hat{1}$ is defined to be $l_{\hat{1}}$.  Note that $m_p$ is a monomial since clearly $l_{p}$ divides $l_{t}$ for all $t >p$.

This labeling can be used to prove that every monomial ideal can be
realized as a coordinatization of its lcm lattice, as shown in the
following proposition, which appears as Proposition 3.6 in \cite{mapes}. 

\begin{proposition}\label{realizable}
Given $M$, a monomial ideal with lcm lattice $P_M$, if $P$ is the abstract finite atomic lattice such that $P$ and $P_M$ are isomorphic  as lattices, then the labeling $\mathcal{D}$ of $P$ as defined by (\ref{deficitLabeling}) is a coordinatization and the resulting monomial ideal $M_{P, \mathcal{D}} = M$.
\end{proposition}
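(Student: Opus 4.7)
The plan is to show directly that for every atom $a \in P$, the monomial $x(a)$ built from the deficit labels $\mathcal{D}$ of (\ref{deficitLabeling}) satisfies $x(a) = l_a$. Once this identity is established, the generators of $M_{P,\mathcal{D}}$ coincide with those of $M$, so $M_{P,\mathcal{D}} = M$; in particular its lcm lattice is $L_M \cong P$, and $\mathcal{D}$ is a coordinatization by definition. The whole proposition therefore reduces to proving this single identity.

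To verify $x(a) = l_a$, I would work one variable at a time. Fix $x_i$, let $e_p$ denote the $x_i$-exponent of $l_p$, and set $F_k := \{p \in P : e_p \geq k\}$ for each $k \geq 1$. The exponent of $x_i$ in $m_p$ is $\min_{t>p} e_t - e_p$, so the task is to show
\[
\sum_{p \not\geq a}\Bigl(\min_{t>p} e_t - e_p\Bigr) \;=\; e_a.
\]
Using $e_p = |\{k \geq 1 : p \in F_k\}|$ and $\min_{t>p}e_t = |\{k \geq 1 : (p,\hat 1]\subseteq F_k\}|$, both terms can be rewritten as counts of pairs $(p,k)$. After swapping the order of summation and taking the difference termwise in $k$, the left-hand side becomes $\sum_{k \geq 1}|\partial F_k \setminus \lceil a\rceil|$, where $\partial F_k$ denotes the set of maximal elements of $P \setminus F_k$.

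The decisive structural input is that the join in $P = L_M$ is given by $\lcm$, which takes pointwise maxima of exponents. Consequently $P \setminus F_k$ is closed under joins, and since it also contains $\hat 0$ it has a unique maximum element $p_k$; thus $\partial F_k = \{p_k\}$ is a singleton. Moreover $p_k$ is the join of all atoms $a'$ with $e_{a'} < k$, so for the atom $a$ one has $a \leq p_k$ iff $e_a < k$, that is, iff $a \notin F_k$. Equivalently, $p_k \not\geq a$ iff $a \in F_k$.

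Substituting this into the rearranged sum collapses it to $|\{k \geq 1 : a \in F_k\}| = e_a$, which is precisely the $x_i$-exponent of $l_a$. Running the argument for each variable gives $x(a) = l_a$. I expect the main obstacle to be the combinatorial bookkeeping that rewrites $\sum_{p\not\geq a}(\min_{t>p}e_t - e_p)$ as $\sum_k|\partial F_k\setminus \lceil a\rceil|$, together with the identification of $\partial F_k$ as a singleton; once one realizes that the complement of a coordinate filter in an lcm lattice is join-closed, the telescoping is routine.
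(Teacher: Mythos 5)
Your argument is correct, and it is self-contained where the paper is not: the paper states this result as a quotation of Proposition 3.6 of \cite{mapes} and gives no proof, so the only thing to compare against is the closely related argument the paper does give for Proposition \ref{whenCoord}. There, the same phenomenon is handled by peeling off one exponent level at a time via the sets $A, B, A_s, B_s$ and arguing that each leftover power of $x$ lands on the unique maximal element of the current complement. Your proof is the same idea organized as a clean double count: writing $e_p=\#\{k: p\in F_k\}$ and $\min_{t>p}e_t=\#\{k:(p,\hat 1]\subseteq F_k\}$, the termwise difference identifies $\min_{t>p}e_t-e_p$ with the number of $k$ for which $p$ is a maximal element of $P\smallsetminus F_k$ (this uses that $F_k$ is an up-set, which holds since $l_p\mid l_t$ for $t>p$). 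The decisive step --- that $P\smallsetminus F_k$ is join-closed because joins in $L_M$ are lcms, hence has a unique maximum $p_k$ equal to the join of the atoms outside $F_k$, so that $p_k\not\geq a$ exactly when $a\in F_k$ --- is the precise structural fact that the paper's $A_s$/$B_s$ iteration gestures at with ``if $A_s$ does not contain a unique maximal element\dots''. The edge cases ($p=\hat 1$ never occurs in the product since $\hat 1\geq a$; $F_k=\emptyset$ gives $p_k=\hat 1$ and contributes nothing) are harmless, and the reduction of the whole proposition to the identity $x(a)=l_a$ for atoms is legitimate because the atoms of $L_M$ are exactly the minimal generators of $M$. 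In short: correct, and arguably tidier than the template the paper itself uses for the analogous computation.
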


\section{Characterizing coordinatizations}\label{CharCoord}

Theorem 3.2 in  \cite{mapes} gives a partial characterization of how to coordinatize a finite atomic lattice.  Further, \cite{mapes} explains how given a monomial ideal $M$, one can find the coordinatization of $L_M$ which produces $M$. Here we aim to use this process to characterize when a labeling is a coordinatization.
With this in mind, we introduce a construction similar to
$\mathcal{D}$, for a labeling $\mathcal{M}$ and a finite atomic
lattice $P$. This object, which we will show agrees with
$\mathcal{D}$ in the case where $\mathcal{M}$ is a coordinatization of
$P$, will be denoted  $\mathcal{D}_{\mathcal{M}}$.
Let $P$ be a finite atomic lattice with $n$ atoms, and $\mathcal{M}$ be any labeling of the lattice $P$. 
As in Section \ref{coord}, $M_{\mathcal{M}}$ will be the monomial
ideal generated by the monomials $x(a_i)$, described in (\ref{IdealDef}).
 
$\mathcal{D}_{\mathcal{M}}$ is constructed similarly to $\mathcal{D}$. However, rather than using the lcm lattice of $M_{\mathcal{M}}$, it uses the original lattice $P$ itself. Here
\begin{equation}\label{fakeLCMs1}
 l_{a_i} = x(a_i)
\end{equation}
 for each atom $a_i \in P$ and
\begin{equation}\label{fakeLCMs2}
 l_p =  \lcm \{ l_{t} \,| \, p > t \}
\end{equation}
for each element $ p \in P$.
 It is worth emphasizing that each of the $x(a_i)$ is used, appearing
 as $l_{a_i}$ for an atom of $P$, not just a minimial generating set
 for $M_{\mathcal{M}}$. The labeling $\mathcal{D}_{\mathcal{M}}$ is
 then defined as the set of monomials $m_p$ described in equation
 (\ref{deficitLabeling}), using the monomials $l_p$ defined in
 (\ref{fakeLCMs1}) and (\ref{fakeLCMs2}).  This means
 $\mathcal{D}_{\mathcal{M}}$ is a labeling of $P$ (remember that $P$
 may not be the lcm lattice of $M_{\mathcal{M}}$).

\begin{example}\label{DMexample}
In Figure \ref{twoLabelings}, we show a poset $P$ and the verticies which
are labeled using the variable $x$ in a labeling $\mathcal{M}$.  This example
 $\mathcal{M}$ violates condition (C2), since the
variable $x$ appears at non-comparable positions.  The
symbol $X$ indicates which elements of $P$ are labeled with $x$ in
$\mathcal{D}_{\mathcal{M}}$.  Note that one of the $x$
  labels ``moves'' to the minimal element in $P$. This is because
 there is at least one atom of $P$ that is not
  less than any of the poset elements labeled with $x$  in the original labeling $\mathcal{M}$.
   As a result,
  $x(a_i)$ will have a factor of $x$ for each atom $a_i$.  

We should also note that if one completed this partial labeling $\mathcal{M}$ so
that the other variables satisfied (C2) and  all the meet
irreducibles were nontrivially labeled (thus satisfying (C1)),  the resulting labeling would
be a coordinatization even though it does not satisfy (C2). 
\end{example}

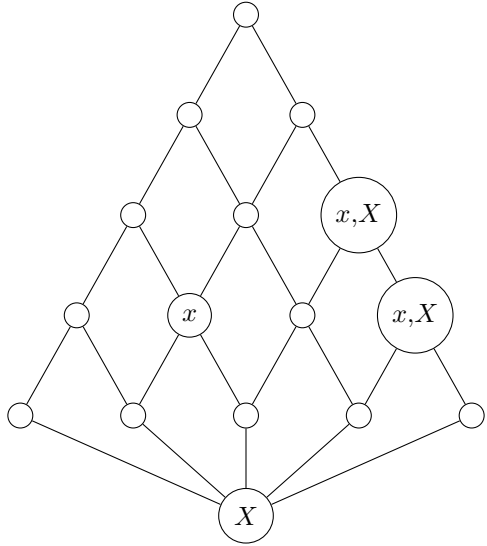
\begin{figure} 
\centering

\begin{tikzpicture}[scale=1, vertices/.style={draw, circle}]
             \node [vertices] (0) at (-0+0,0){$X$};
             \node [vertices] (1) at (-3+0,1.33333){};
             \node [vertices] (2) at (-3+1.5,1.33333){};
             \node [vertices] (3) at (-3+3,1.33333){};
             \node [vertices] (4) at (-3+4.5,1.33333){};
             \node [vertices] (5) at (-3+6,1.33333){};
             \node [vertices] (6) at (-2.25+0,2.66667){};
             \node [vertices] (7) at (-2.25+1.5,2.66667){$x$};
             \node [vertices] (8) at (-2.25+3,2.66667){}; 
             \node [vertices] (9) at (-2.25+4.5,2.66667){$x$,$X$};
             \node [vertices] (10) at (-1.5+0,4){};
             \node [vertices] (11) at (-1.5+1.5,4){};
             \node [vertices] (12) at (-1.5+3,4){$x$,$X$};
             \node [vertices] (13) at (-.75+0,5.33333){};
             \node [vertices] (14) at (-.75+1.5,5.33333){};
             \node [vertices] (15) at (-0+0,6.66667){};
     \foreach \to/\from in {0/1, 0/2, 0/3, 0/4, 0/5, 1/6, 2/6, 2/7, 3/8, 3/7, 4/8, 4/9, 5/9, 6/10, 7/10, 7/11, 8/12, 8/11, 9/12, 10/13, 11/13, 11/14, 12/14, 13/15, 14/15}
     \draw [-] (\to)--(\from);
     \end{tikzpicture}
\caption{The partial labeling described in example \ref{DMexample}, showing one variable from $\mathcal{M}$ and the same variable in $\mathcal{D}_{\mathcal{M}}$.}  \label{twoLabelings}
\end{figure}

The following proposition asserts that $\mathcal{D}_{\mathcal{M}}$
 determines whether the original labeling $\mathcal{M}$ was in fact a
 coordinatization of $P$.  An equivalent result
 appears independently as Theorem 3.4 in \cite{IKM-F} using different terminology.  We include our proof here
 as it continues in the language and terminology of Theorem 3.2 in
 \cite{mapes}.  In the proof we will use  $\deg_x{m}$ to
 denote the degree of a variable $x$  in a given monomial $m$.

\begin{proposition}\label{whenCoord}
 
 $\mathcal{M}$ is a coordinatization of $P$ if and only if $\mathcal{D}_{\mathcal{M}}$  satisfies (C1) and (C2).
In particular, this means $M_{\mathcal{M}}$ has an lcm lattice isomorphic to $P$.
\end{proposition}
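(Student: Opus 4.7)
My plan is to split the proposition into its two implications and reduce both to previously-stated results via a single bridging lemma: if $\mathcal{D}_\mathcal{M}$ satisfies (C2), then $y(a) := \prod_{p \not\geq a} m_p^{\mathcal{D}_\mathcal{M}}$ equals $l_a = x(a)$ for every atom $a$, so in particular $M_\mathcal{M} = M_{\mathcal{D}_\mathcal{M}}$ as ideals.

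Granting the bridging lemma, the proof is short. For $(\Leftarrow)$: if $\mathcal{D}_\mathcal{M}$ satisfies (C1) and (C2), then Theorem \ref{coordinatizations} gives an isomorphism $L_{M_{\mathcal{D}_\mathcal{M}}} \cong P$, and the lemma promotes this to $L_{M_\mathcal{M}} \cong P$. For $(\Rightarrow)$: if $\mathcal{M}$ coordinatizes $P$, then $P \cong L_{M_\mathcal{M}}$ as lattices, and an easy induction on rank shows that the formal $l_p$'s of (\ref{fakeLCMs1})--(\ref{fakeLCMs2}) agree with the honest lcm-lattice monomials at the corresponding elements; hence $\mathcal{D}_\mathcal{M}$ coincides with the canonical deficit labeling $\mathcal{D}$ of this lcm lattice. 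A direct check then verifies (C1) and (C2) for $\mathcal{D}$: (C1) holds because every meet-irreducible $p$ has a unique cover $q$ with $l_q \neq l_p$, and (C2) holds because in an honest lcm lattice the set $\{r : \deg_x l_r < k\}$ is closed under $\vee = \lcm$ and hence has a unique maximum, forcing the $x$-labels of $\mathcal{D}$ to lie on a chain.

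The main obstacle is the bridging lemma, which I would prove one variable at a time. Fix $x$ and set $d_p = \deg_x l_p$; then $\deg_x m_p^{\mathcal{D}_\mathcal{M}} = \min_{t > p} d_t - d_p$ is positive exactly when $p$ is maximal in some down-set $F_k^c := \{q \in P : d_q < k\}$. Hypothesis (C2) forces each $F_k^c$ to have a \emph{unique} maximum $p_k^*$, since two incomparable maxima would both have $\mathcal{D}_\mathcal{M}$-labels divisible by $x$, violating (C2). With $F_k^c = \lfloor p_k^* \rfloor$, the tautology $p_k^* \not\geq a \iff a \not\leq p_k^*$ lets me compute
\[
\deg_x y(a) = \sum_{p \not\geq a} \bigl(\min_{t > p} d_t - d_p\bigr) = |\{k \geq 1 : a \not\leq p_k^*\}| = |\{k : d_a \geq k\}| = d_a = \deg_x l_a,
\]
giving $y(a) = l_a$. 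The point worth emphasizing is that (C2) plays a dual role here: it both feeds Theorem \ref{coordinatizations} and is exactly what forces the formal lcm computation on $P$ to reproduce honest monomial lcms of the generators $\{l_a\}$. Without (C2) the identity $M_\mathcal{M} = M_{\mathcal{D}_\mathcal{M}}$ can genuinely fail, already on small atomic lattices where $\mathcal{M}$ places a single variable at incomparable atoms.
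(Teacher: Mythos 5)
Your proof is correct, and its skeleton matches the paper's: the forward direction reduces to the canonical deficit labeling $\mathcal{D}$, and the reverse direction applies Theorem \ref{coordinatizations} to $\mathcal{D}_{\mathcal{M}}$ and then reduces everything to the identity $M_{\mathcal{D}_{\mathcal{M}}}=M_{\mathcal{M}}$. Where you genuinely diverge is in how that identity is established. The paper argues iteratively: for each variable $x$ it peels off the top exponent $r$, forms the sets $A$ and $B$, shows the $x$-degrees of the labels on $B$ sum to $r$, and then recurses on $B$ with the next exponent $s<r$, invoking (C2) at each stage to rule out two incomparable maximal elements of $A_s$. Your bridging lemma replaces this recursion with a single counting identity: $\deg_x m_p$ equals the number of thresholds $k$ for which $p$ is the (unique, by (C2)) maximum of the down-set $\{q: \deg_x l_q<k\}$, and summing over $p\not\geq a$ telescopes to $\deg_x l_a$. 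This is cleaner, handles all exponent levels uniformly, and makes transparent that (C2) is exactly the hypothesis needed for the formal lcm computation on $P$ to reproduce the honest lcms. You also do slightly more than the paper in the forward direction: the paper cites Proposition \ref{realizable} and the identification $\mathcal{D}_{\mathcal{M}}=\mathcal{D}$, implicitly outsourcing the verification that $\mathcal{D}$ satisfies (C1) and (C2) to \cite{mapes}, whereas you verify both directly (meet-irreducibles have a unique cover with a strictly larger monomial; the sets $\{r:\deg_x l_r<k\}$ are join-closed in an honest lcm lattice, so their maxima form a chain). The only small caveat is the usual convention that $\hat{1}$ is excluded from $\mi(P)$, which your (C1) check tacitly uses; this matches the paper's convention elsewhere.
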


\begin{proof}

 The forward direction follows from Proposition 3.6 of \cite{mapes}
 because if $\mathcal{M}$ is a coordinatization of $P$,  by definition
 $\mathcal{D}_{\mathcal{M}} = \mathcal{D}$ (i.e. $P =
 P_{M_{\mathcal{M}}}$).

 For the reverse direction we assume that $\mathcal{D}_{\mathcal{M}}$
 satisfies (C1) and (C2), which means that by Theorem \ref{coordinatizations},
 $\mathcal{D}_{\mathcal{M}}$ is a coordinatization.  In particular
 this tells us that $P$ is the lcm lattice of the monomial ideal
 $M_{\mathcal{D}}$ generated by the labeling
 $\mathcal{D}_{\mathcal{M}}$.  If we can show that 
 the monomial ideal $M_{\mathcal{D}}$ is equal to $M_{\mathcal{M}}$,
 then the lcm lattice of $M_{\mathcal{M}}$ will also be $P$, thus making $\mathcal{M}$ a coordinatization. 
 It is enough show that the monomial generators derived from  
 $\mathcal{M}$ agree with those obtained from $\mathcal{D}_{\mathcal{M}}$, specifically, that the exponents 
 on each variable agree. 

For clarity in the proof let us emphasize that in what follows the monomials $l_p$
will always be least common multiples of the generataors of
$M_{\mathcal{M}}$ and the monomials $m_p$ will always be the labelings
found in $\mathcal{M}_{\mathcal{D}}$.

 Let $x$ be a variable appearing in some generator of $M_{\mathcal{M}}$, 
  and let $r$ be the highest power of $x$ that divides any generator. 
 There is a subset of atoms in $P$ whose corresponding generators in $M_{\mathcal{M}}$ 
 have $x^r$ as a factor,
 call this set $\max(x)$. Define the set $A$ to be the set of elements in $P$ greater than
 or equal to the elements in $\max(x)$, and define the set $B$ to be
 the complement of $A$ in $P$. 

For each element $p$ in $A$ the $\deg_x{l_p}$ must be $r$, 
  and $\deg_x{l_p}$ for each element $p$ in $B$ must be strictly less than
  $r$.  In order to ensure that $\deg_x{x(a_i)}$ is $r$ for each $a_i$
  in $\max(x)$ where $x(a_i)$ is a generator of $M_{\mathcal{D}}$,
  it is enough to show that  \[\sum_{p \in B} \deg_x{m_p} \] is $r$. 

Note that
 $A$ must contain 
 the maximal element $\hat{1}$ in $P$, as it is the least common
 multiple of all generators. Also, 
  the minimal element $\hat{0}$, whose least common multiple is defined to be $1$, must be in $B$.

 We know $\hat{1}$ is in $A$, and $\deg_x{l_p}$ for all $p$ in $A$.
 Thus, $x^r$ divides $\gcd \{ l_{t} \,| \, t > b\}$ 
(from equation (\ref{deficitLabeling}))
 for every element $b$ which is the  maximal element of a chain in
 $B$.  
   If \[\sum_{p \in B-\{\hat{0}\}} \deg_x{m_p} = k < r, \] then
   $x^{r-k}$ will be a factor of $m_{\hat{0}}$.  
   To see this consider equation (\ref{deficitLabeling}) which shows that  $m_{\hat{0}} =  \gcd \{ l_{t} \,| \, t > \hat{0} \}$.
    This means the \[\sum_{p \in B} \deg_x{m_p} = r, \] as
    required. Therefore, $\deg_x{x(a_i)}$ is $r$ for each $a_i$ in
    $\max(x)$ where $x(a_i)$ is the corresponding generator of $M_{\mathcal{D}}$.
 
 It remains to show that exponents on $x$ agree for generators of $M_{\mathcal{D}}$ corresponding
 to atoms in $B$.
 For this, we consider the subposet $B$ of $P$, which is itself a poset, and apply our previous procedure iteratively. 
 Let $s < r$ be the highest power of $x$ coming from a generator of
 $M_{\mathcal{M}}$  corresponding to an atom in $B$. 
 The elements in $B$ greater than the set of atoms for which $x^s$ divides
 $x(a_i)$ in $M_{\mathcal{M}}$ will be $A_s$, 
 and the set of elements in $B - A_s$ will be $B_s$.

The monomial $l_p$ for each $p$ in $A_s$ has $x^s$ as a factor. This means if $A_s$ has
a unique maximal element it will be labeled with $x^{r-s}$ in
$\mathcal{D}_{\mathcal{M}}$ using equation (\ref{deficitLabeling}), and no other element $p$ in $A_s$ can have
 $x$ as a factor of the monomial $m_p$ in $\mathcal{D}_{\mathcal{M}}$.

  As in the previous case, there will be $s$ copies of $x$ remaining to label elements in
  $B_s$ in the construction of $\mathcal{D}_{\mathcal{M}}$.  Again,
  since $\hat{0}$ is in $B_s$, the \[\sum_{p \in B_s} \deg_x{m_p} \]
  will be $s$.  Therefore, $s$ is the $\deg_x{x(a_i)}$ for $a_i$ an
  atom in $A_s$ where the monomial $x(a_i)$ is a generator for
  $M_\mathcal{D}$.

  This process can be repeated for the next highest power of $x$ 
   appearing in a generator of $M_{\mathcal{M}}$ coming from $B_s$. 

 If, however, $A_s$ does not contain a unique maximal element,
 equation (\ref{deficitLabeling}) shows that there will be at least two non-comparable elements in $A_s$ which are labeled with $x^{r-s}$ in $\mathcal{D}_{\mathcal{M}}$. This means that $\mathcal{D}_{\mathcal{M}}$ contains copies of $x$ at non-comparable elements, hence violating (C2), which we are assuming to be true.
 
 Applying this procedure for each variable appearing in the generators
 of $M_{\mathcal{M}}$, shows that $M_\mathcal{D}$ and
 $M_{\mathcal{M}}$ have the same generators, making $\mathcal{M}$ a
 coordinatization of $P$.

\end{proof}

\begin{figure}
\center

\begin{tikzpicture}[scale=1, vertices/.style={draw, circle}]
             \node [vertices] (0) at (-0+0,0){};
             \node [vertices] (1) at (-3+0,1.33333){$A_3$};
             \node [vertices] (2) at (-3+1.5,1.33333){$A_2$};
             \node [vertices] (3) at (-3+3,1.33333){$A_1$};
             \node [vertices] (4) at (-3+4.5,1.33333){$A_1$};
             \node [vertices] (5) at (-3+6,1.33333){$A_1$};
             \node [vertices] (6) at (-2.25+0,2.66667){$A_3$};
             \node [vertices] (7) at (-2.25+1.5,2.66667){$A_2$};
             \node [vertices] (8) at (-2.25+3,2.66667){$A_1$};
             \node [vertices] (9) at (-2.25+4.5,2.66667){$A_1$};
             \node [vertices] (10) at (-1.5+0,4){$A_3$};
             \node [vertices] (11) at (-1.5+1.5,4){$A_2$};
             \node [vertices] (12) at (-1.5+3,4){$A_1$};
             \node [vertices] (13) at (-.75+0,5.33333){$A_3$};
             \node [vertices] (14) at (-.75+1.5,5.33333){$A_2$};
             \node [vertices] (15) at (-0+0,6.66667){$A_3$};
     \foreach \to/\from in {0/1, 0/2, 0/3, 0/4, 0/5, 1/6, 2/6, 2/7, 3/8, 3/7, 4/8, 4/9, 5/9, 6/10, 7/10, 7/11, 8/12, 8/11, 9/12, 10/13, 11/13, 11/14, 12/14, 13/15, 14/15}
     \draw [-] (\to)--(\from);
     \end{tikzpicture}
\caption{The sets $A_s$ for variable $x$, as described in example \ref{DMexamplePart2}.}\label{AsSets}
\end{figure}
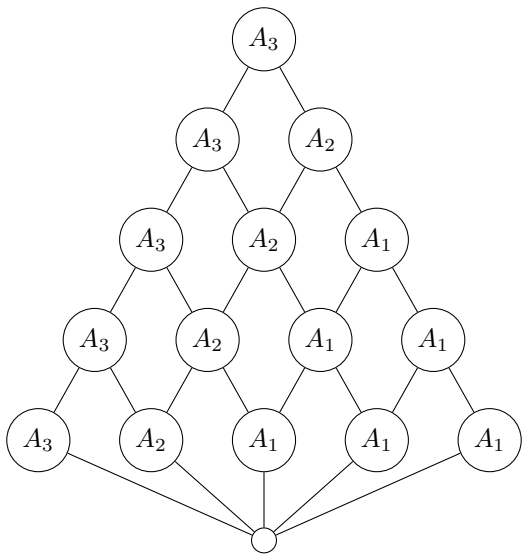

\begin{example} \label{DMexamplePart2}
For clarity we illustrate the sets $A_s$ for our Example
\ref{DMexample} in the Figure \ref{AsSets}.  Since the greatest
exponent of $x$ amongst the generators $x(a_i)$ for the labeling
$\mathcal{M}$ is 3, we use $A_3$ to denote the set $A$ from the proof,
 distinguishing it from the subsequent sets $A_2$ and $A_1$.  In this example,
 $B = A_2 \cup A_1 \cup \{\hat{0}\}$, $B_2 = A_1 \cup
\{\hat{0}\}$, and $B_1 = \{\hat{0}\}$.

\end{example}

\section{Nearly Scarf Ideals and Minimal monomial
  ideals} \label{nearlyScarfFaridi}
  
Coordinatizations of lattices have indirectly appeared in several other places as
instances of associating monomial ideals to cell complexes which then
support a minimal free resolution.  One
important example of coordinatizations are the ``nearly Scarf'' ideals
introduced by Peeva and Velasco in \cite{velascoFrames} and
\cite{velascoNonCW}.  The nearly Scarf construction is as follows.
Given a simplical complex $\Delta$, assign to each face $\sigma$ of
$\Delta$ the variable $x_\sigma$.  For a fixed vertex $v$ of $\Delta$,
let $A_\Delta(v)$ be the subcomplex of
$\Delta$ contating all the faces of $\Delta$ which do not contain the
vertex $v$.  The monomial ideal
$M_\Delta$ is generated by  the monomials \[m_v = \prod_{\sigma \in
  A_\Delta(v)} x_\sigma\] for each vertex $v$ of $\Delta$.

This construction can be seen as a coordinatization of the
(augmented) face poset of the simplicial complex $\Delta$ (note that the face
poset of a simplicial complex is a meet semi-lattice and so with a
maximal element becomes a finite atomic lattice).  Since
labeling every face $\sigma$ with a varaible $x_\sigma$ corresponds to
labeling every element of the face 
poset of the simplicial complex with a different variable, 
the formula for $m_v$ is identical to  equation (\ref{IdealDef}).  Clearly,
this satisfies the conditions of Proposition \ref{coordinatizations},
since all faces are labeled with distinct variables.

Another important example of coordinatizations are Phan's ``minimal
monomial ideals'' \cite{phan}.  In some sense these are the opposite
of nearly Scarf ideals, 
as they use the smallest number of variables possible.
  The construction for minimal squarefree ideals is
as follows.  Given a finite atomic lattice $P$,
let $\mi(P)$ denote the set of meet-irreducible elements in $P -
\{\hat{0}, \hat{1}\}$.  Then, label each element in $\mi(P)$ with a
distinct variable.  This labeling clearly satisfies conditions (C1)
and (C2), so it is a coordinatization of $P$.

\subsection{Resolutions supported on simplicial trees}

In \cite{Faridi}, Faridi addresses Scarf ideals corresponding to acyclic simplicial complexes, 
in particular  simplicial trees, offering
an alternative Scarf ideal in fewer variables than the ideals
constructed by Peeva and Velasco. Given a simplicial
complex $\Delta$, a variable $x_\sigma$ is still assigned to each face
$\sigma$, 
but only the variables for certain faces appear in the ideal.
 In \cite{Faridi},
 the monomial generators for each vertex $v$
 are defined as:
\begin{equation}\label{FaridiIdeal}
m'_v = \sqrt{ (\prod_{\{G \in B_\Delta(v)\}} x_{G - \{v\}} )
  (\prod_{\{F \in A_\Delta(v)\}} x_F(\prod_{\{| \sigma | = | F | -
    1\}} x_\sigma))}. 
\end{equation}  

Here,   $B_\Delta(v)$ and $A_\Delta(v)$  are the facets of $\Delta$
that do and do not contain $v$ respectively, and the square root
indicates that $m'_v$ is the square-free monomial containing all
variables in the described product. 

The first product indicates that for each facet $G$ containing $v$,
the variable for the facet of $G$ not containing $v$ is to be
included.  The second product indicates that for each facet $F$ not
containing $v$, both the variable for that face and all of the facets
of $F$ are to be included. 
Proposition 4.3 in \cite{Faridi} establishes that when $\Delta$ is
acyclic (a simplicial tree), the ideal generated by the monomials
$m'_v$ has a minimal free resolution supported on $\Delta$.

To see that this construction gives a coordinatization, we must check
that the meet-irreducibles are still labeled (since each face gets its
own variable we do not need to check condition (C2)), and that the products
that we obtain agree with the equations given in equation
(\ref{IdealDef}) for the given labeling.  

In the $B_\Delta(v)$ term, since  $G$ will always be a facet
and  we are taking products of variables corresponding to
$x_{G-\{v\}}$, as we let $v$ vary we use the labels on all of the
codimension one faces of each facet $G$.  Moreover, since every vertex
is contained in a facet,  we are using the labels on
every codimension one face of each of the facets of $\Delta$.

In the $A_\Delta(v)$ term,  we use the label on all of the facets $F$ which
do not contain $v$ and then all of the codimension one faces of that
facet $F$.  

As we let $v$ range over all verticies of $\Delta$, we see
that if every facet contains every vertex $v$ (i.e. if $G$ is in
$B_\Delta(v)$ for all $v$), then $\Delta = G$ and $\Delta$ is a
simplex.  Otherwise, each facet $G$ of $\Delta$ will be
in $B_\Delta(v)$ for some $v$ and then be in $A_\Delta(v')$ for some
$v'$.  So if $\Delta$ is not a simplex,  we must use the label on
every facet of $\Delta$.  If $\Delta$ is a simplex,  the only
facet corresponds to the maximal element of the face poset and its
labeling is irrelavent for the coordinatization as the element is
greater than every atom (which is consistent
with the fact that it will not appear as a monomial in equation (\ref{FaridiIdeal})).   

Therefore, we can describe the labeling of the (augmented) face poset $P_\Delta$
of $\Delta$ as labeling all of the elements corresponding to facets of
$\Delta$ (unless $\Delta$ is a simplex) and all of the elements corresponding to codimension one faces
of those facets. Call this labeling $\mathcal{F}_\Delta$.    

\begin{lemma}
The labeling $\mathcal{F}_\Delta$ of $P_{\Delta}$is a coordinatizataion.  
\end{lemma}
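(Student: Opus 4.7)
The plan is to apply Theorem \ref{coordinatizations} to the labeling $\mathcal{F}_\Delta$ of $P_\Delta$ by verifying conditions (C1) and (C2); the discussion preceding the lemma has already matched the monomials arising from (\ref{IdealDef}) with Faridi's $m'_v$ in (\ref{FaridiIdeal}), so it remains only to check that $\mathcal{F}_\Delta$ genuinely coordinatizes $P_\Delta$.

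Condition (C2) is immediate: since each face of $\Delta$ receives its own distinct variable, any two labeled elements of $P_\Delta$ have disjoint variable sets, so the hypothesis of (C2) is vacuously satisfied.

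The crux is condition (C1), that every meet-irreducible element of $P_\Delta$ is labeled. I would use the standard characterization that in a finite lattice an element is meet-irreducible precisely when it has a unique upper cover, and then split into two cases. If the meet-irreducible face $\sigma$ is a facet of $\Delta$, then its unique upper cover in $P_\Delta$ is the added top $\hat{1}$, and $\sigma$ is labeled by $\mathcal{F}_\Delta$; the simplex case is handled separately, since then the single facet coincides with the maximal element and its label is genuinely irrelevant, as $\hat{1} \in \lceil a \rceil$ for every atom $a$ forces $m_{\hat{1}}$ never to appear in any generator $x(a)$ coming from (\ref{IdealDef}). If instead $\sigma$ is a non-facet meet-irreducible with unique upper cover $\tau = \sigma \cup \{v\}$, the key claim is that $\tau$ must itself be a facet: otherwise some face $\tau' \supsetneq \tau$ exists, and for any $w \in \tau' \setminus \tau$ the set $\sigma \cup \{w\} \subseteq \tau'$ is a face of $\Delta$, producing a second upper cover of $\sigma$ distinct from $\tau$ and contradicting uniqueness. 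Thus $\sigma$ is a codimension-one face of the facet $\tau$, and so $\sigma$ is labeled by $\mathcal{F}_\Delta$.

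The main obstacle, mild as it is, lies in this upper-cover dichotomy for non-facet meet-irreducibles together with bookkeeping for the degenerate cases (the added top $\hat{1}$ and the simplex). Once both (C1) and (C2) are established, Theorem \ref{coordinatizations} delivers the lemma immediately.
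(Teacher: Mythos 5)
Your proposal is correct and follows essentially the same route as the paper: condition (C2) is automatic because the faces carry distinct variables, and (C1) reduces to showing every meet-irreducible of $P_\Delta$ is a facet or a codimension-one face of a facet, all of which $\mathcal{F}_\Delta$ labels (with the simplex case set aside since the top element needs no label). The only cosmetic difference is that you identify the meet-irreducibles via the unique-upper-cover characterization, whereas the paper observes that the interval below each facet is a Boolean lattice whose meet-irreducibles are its coatoms; both yield the same classification.
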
   

\begin{proof}
Note that each element (or equivallently each face in $\Delta$) gets
labeled with a distinct variable.  Therefore, in $\mathcal{F}_\Delta$
condition (C1) will automatically be satisfied.  It remains to show
that $\mathcal{F}_\Delta$ non trivially labels all of the meet
irreducible elements of $P_\Delta$.

If $\Delta$ is a simplex,  $\mathcal{F}_\Delta$ labels all of the
coatoms of the face poset, which coincides with the set of meet
irreducibles. 
 
If $\Delta$ is not a simplex,  each facet $F$ of $\Delta$ is a simplex. So the interval of $P$ below $F$ is a boolean lattice. 
 The meet irreducibles of a boolean lattice are its co-atoms, 
 which in this sublattice correspond to codimension one faces of $F$.
  For each facet $F$, there is at least one vertex $v$ in $\Delta$ not contained in $F$.
  For each vertex $v$, the codimension one faces of facets not containing $v$ are labeled,
  so the codimension one faces of every facet are labeled.
   The only other possible meet irreducibles of the lattice would be
    elements corresponding to the facets themselves.
    Again, since for each facet $F$ there is at least one vertex $v$ of $\Delta$ 
    such that $F$ does not contain $v$, and for each vertex $v$ the facets 
    not containing $v$ are labeled, all facets are labeled.
   Therefore, all meet irreducible are labeled.
 
\end{proof}

\begin{lemma}
The monomial ideal created by the labeling $\mathcal{F}_\Delta$ equals
the ideal obtained using equation \ref{FaridiIdeal}.

\end{lemma}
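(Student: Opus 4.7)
The plan is to reduce the equality of monomials to an equality of the sets of variables appearing on each side, since every face of $\Delta$ receives a distinct variable under $\mathcal{F}_\Delta$ and the square root in (\ref{FaridiIdeal}) makes $m'_v$ squarefree, while the generator $x(v)$ from (\ref{IdealDef}) is automatically squarefree because each variable occurs on at most one element of $P_\Delta$.

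First I would unwind $x(v)$ explicitly. The atoms of the augmented face poset $P_\Delta$ are the vertices of $\Delta$, and $\lceil v \rceil^c$ consists of $\hat 0$ together with the faces of $\Delta$ that do not contain $v$. Under $\mathcal{F}_\Delta$ the only labeled elements are the facets of $\Delta$ (unless $\Delta$ itself is a simplex) together with the codimension one faces of those facets. Therefore $x(v)$ splits as the product of $x_F$ over facets $F$ with $v \notin F$, times the product of $x_\sigma$ over codimension one faces $\sigma$ of some facet with $v \notin \sigma$.

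Next I would expand $m'_v$ as a squarefree product indexed by the three sets of variables $\{x_{G-\{v\}} : G \in B_\Delta(v)\}$, $\{x_F : F \in A_\Delta(v)\}$, and $\{x_\sigma : F \in A_\Delta(v),\ \sigma \subset F,\ |\sigma|=|F|-1\}$. The middle piece matches the facet part of $x(v)$ verbatim. The main step is to match the codimension one parts by a dichotomy on the facet containing a given $\sigma$: any codimension one face $\sigma \not\ni v$ of some facet of $\Delta$ sits in a facet $F$, and either $v \in F$, in which case $\sigma = F-\{v\}$ and the variable $x_\sigma$ appears through the $B_\Delta(v)$ term, or $v \notin F$, in which case $x_\sigma$ appears through the codimension one piece of the $A_\Delta(v)$ term. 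The reverse inclusion is immediate since both $G-\{v\}$ and the $\sigma \subset F$ with $F \in A_\Delta(v)$ are codimension one faces of facets avoiding $v$, and the squareroot absorbs any double-counting coming from a face being codimension one in several facets.

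Finally I would dispose of the simplex case separately: when $\Delta$ is a single simplex with unique facet $F$, one has $A_\Delta(v) = \emptyset$ and (\ref{FaridiIdeal}) collapses to $m'_v = x_{F-\{v\}}$; meanwhile $\mathcal{F}_\Delta$ labels only the codimension one faces of $F$, and the only such face avoiding $v$ is $F-\{v\}$ itself, giving $x(v)=x_{F-\{v\}}$ as well. I do not expect a real obstacle here; the argument is essentially bookkeeping, and the only subtlety worth flagging is the double-counting observation just noted, which is precisely what the squareroot in (\ref{FaridiIdeal}) is there to handle.
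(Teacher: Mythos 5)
Your proposal is correct and follows essentially the same route as the paper: both reduce the claim to an equality of the sets of variables appearing in $m'_v$ and in $x(a_v)$ (using that both monomials are squarefree), and both identify that set as the labeled faces of $\Delta$ not containing $v$, namely the facets avoiding $v$ together with the codimension one faces of facets that avoid $v$. Your explicit dichotomy on whether the ambient facet of $\sigma$ contains $v$, and your separate treatment of the simplex case, are just slightly more detailed versions of the same bookkeeping the paper carries out.
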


\begin{proof}
Since both the ideal defined by $\mathcal{F}_\Delta$ and the ideal
defined by the equation \ref{FaridiIdeal} are squarefree, it suffices
to show that the variable $x$ divides $m'_v$ if and only if $x$
divides $x(a_v)$, where $a_v$ is the atom in $P_\Delta$ corresponding
to vertex $v$ in $\Delta$.  

Unpacking equation \ref{FaridiIdeal}, we see that if $x$ divides $m'_v$ then $x$ is the variable associated to either a
facet of $\Delta$ not containing $v$ or a codimension one face of any
facet of $\Delta$ which does not contain $v$.  These simplicies are
precisely the elements $P_\Delta$ which are not greater than $a_v$, and
are labeled via $\mathcal{F}_\Delta$.  So by the definition of
$x(a_v)$, $x$ will divide $x(a_v)$.  This equality
of sets also shows that if
$x$ divides $x(a_v)$, then $x$ will divide $m'_v$.
\end{proof}

In \cite{Faridi}, Faridi notes that her construction yields ideals
using fewer variables than in the nearly Scarf
construction.  It should be noted that her ideals are not in general the ``smallest possible."
 Phan's minimal
monomial ideals produce the ideals using the fewest variables \cite{phan}.
She
also considerers some ``in-between'' ideals, which are ideals where she adds back
some of the variables found in the nearly Scarf ideal to
some of the generators found using \ref{FaridiIdeal}.  
The reason she finds these
``in-between'' ideals have
different minimal resolutions is that adding back variables to some,
but not all, generators  will typically cause
the lcm lattice to change.  
 
We believe the perspective presented here, using lcm lattices and
coordinatiations, can shed important light on the questions posed
towards the end of \cite{Faridi}.

\section{Maximal Ideals with Resolutions supported on trees}\label{floystad}

In \cite{Floystad1}, Fl\o ystad  defines the category of monomial
ideals $M$ in a polynomial ring $S$ where the quotients $S/M$ are
Cohen-Macaulay, and he defines maximal elements in this category.  He
then gives constructions which associate maximal elements in this category to certain regular cell complexes 
(trees, and some polytopes), when minimal resolutions are supported on the cell complexes.  
Like in \cite{phan},  the focus is constructing monomial ideals with a specific cellular resolution.  
We will discuss the relationship between the two works in subsection
\ref{phanTies}, but first we provide a summary of the main
points from \cite{Floystad1} that will be used.

In \cite{Floystad1},  the set $CM(n,c)$ is defined as the set of ordered sets of $n$ monomials generating a monomial ideal $M$ such that the quotient ring is Cohen-Macaulay of codimension $c$.  
This is a category, but  the added structure is not necessary for our work here.  The set $CM_*(n,c)$ is the subset (subcategory) of $CM(n,c)$ consisting of monomial ideals which are squarefree and for which the sets \[V_t = \{i \,|\, x_t \mbox{ divides } m_i \} \subseteq [n]\] are distinct.  

 In \cite{Floystad1}, Fl\o ystad  initially defines what it means for a monomial ideal to be {\it maximal} using the maps in the category $CM(n,c)$.  
However, the maps in this category are heavily dependent on the
choice of coordinatization  for each monomial ideal.  So for our work,
his characterization identifying objects in $CM(n,c)$ with
 families $\mathcal{F}$ consisting of subsets of $[n]$ 
 is more useful.  These sets $\mathcal{F}$
correspond to the set of all sets $V_t$ described above. In
\cite{Floystad1}, he also gives a description of what properties a family of
sets $\mathcal{F}$ must have in order to correspond to an element in
$CM_*(n,c)$.  

The following are presented as Propositions $1.7$ and $1.10$ in \cite{Floystad1}.  Since they are equivalence statements we state them as definitions here to simplify language.

\begin{definition} \label{MaxFam}
A family of subsets of $[n]$, denoted $\mathcal{F}$, as described
above, which corresponds to an element in $CM_*(n,c)$, is {\it
  maximal} if it is reduced and is maximal among reduced families
corresponding to elements in $CM_*(n,c)$ for the refinement order.  A family $\mathcal{F}$ is {\it reduced} if it consists of elements which are not the disjoint union of other elements in $\mathcal{F}$.  The {\it refinement order} states that for two families of subsets $\mathcal{F} > \mathcal{G}$ if and only if $\mathcal{F}$ consists of refinements of elements of $\mathcal{G}$ together with additional subsets of $[n]$.   
\end{definition}     

In general, characterizing families $\mathcal{F}$ which are also in
$CM_*(n,c)$ seems to be a nontrivial task.  Fl\o ystad 
 restricts to families whose minimal resolution is supported on
a specific regular CW-complex.  These sets are defined as follows.

\begin{definition}\label{CMandRes}
Given a regular d-dimensional cell complex  $X$,    $CM_*(X)$ is the subset of $CM_*(n,c)$ whose minimal resolution is supported on $X$.
  
A family $\mathcal{F}$ is an object in $CM_*(X)$ if the following conditions hold. 
\begin{enumerate}
\item No $d$ of the subsets in $\mathcal{F}$ cover $[n]$. 
\item If $W$ is a union of subsets $\mathcal{F}$, the
  restriction of $X$ to the complement of $W$ is acyclic.
\item For every pair $F \varsubsetneq G$ of faces of $X$, there is a set $S \in \mathcal{F}$ such that $S \cap F$ is empty, but $S \cap G$ is not empty.
\end{enumerate}
\end{definition} 

 Definition \ref{CMandRes} describes how to ``label'' a regular cell complex $X$ so that we can construct an appropriate monomial ideal whose resolution is supported on $X$.  In particular, condition $1$ shows that the corresponding ideal has codimension at least $d+1$, condition $2$ guarantees that $X$ supports a cellular resolution, and condition $3$ ensures this resolution is minimal.

The following lemma which is Lemma 1.13 from \cite{Floystad1} gives a necessary condition for when a family $\mathcal{F}$ satisfying Definition \ref{MaxFam} is maximal.

\begin{lemma}\label{CMandMax}
If a family of subsets $\mathcal{F}$ of $[n]$ corresponds to a maximal object in $CM_*(X)$, then for every $S \in \mathcal{F}$  the restriction of $X$ to $S$ is connected.
\end{lemma}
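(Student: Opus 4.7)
The plan is to prove the contrapositive: assuming some $S \in \mathcal{F}$ has $X|_S$ disconnected (where $X|_S$ denotes the subcomplex of $X$ on vertex set $S$), construct a strictly finer family $\mathcal{F}' > \mathcal{F}$ lying in $CM_*(X)$, contradicting the maximality of $\mathcal{F}$. Let $S = S_1 \sqcup \cdots \sqcup S_k$ (with $k \geq 2$) be the partition of $S$ into the vertex sets of the connected components of $X|_S$, and set $\mathcal{F}' = (\mathcal{F} \setminus \{S\}) \cup \{S_1, \ldots, S_k\}$. By definition of the refinement order, $\mathcal{F}'$ strictly refines $\mathcal{F}$; if $\mathcal{F}'$ fails to be reduced, pass to its reduction, which is still a strict refinement.

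Next, verify conditions (1) and (3) of Definition \ref{CMandRes} for $\mathcal{F}'$ by monotonicity. If some $d$-tuple of subsets of $\mathcal{F}'$ involving a given $S_i$ covered $[n]$, substituting $S \supseteq S_i$ would yield a covering $d$-tuple in $\mathcal{F}$, violating (1) for $\mathcal{F}$; hence (1) persists. For (3), whenever $S$ witnesses a pair $F \subsetneq G$ in $\mathcal{F}$, we have $S_i \cap F \subseteq S \cap F = \emptyset$ for every $i$ and $\emptyset \neq S \cap G = \bigsqcup_i (S_i \cap G)$, so some $S_i$ witnesses the pair in $\mathcal{F}'$.

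The main step is verifying condition (2) for $\mathcal{F}'$: for any union $W' = W \cup \bigcup_{i \in I} S_i$ with $W$ a union of elements of $\mathcal{F} \setminus \{S\}$ and $\emptyset \neq I \subsetneq \{1,\ldots,k\}$, one must show that $X|_{[n] \setminus W'}$ is acyclic. Set $U = [n] \setminus W$ and $U_i = U \setminus S_i$. Then both $X|_U$ and $X|_{U \setminus S}$ are acyclic by condition (2) applied to $\mathcal{F}$, the disjointness of the $S_i$ gives $U = \bigcup_i U_i$, and $\bigcap_{i \in J} X|_{U_i} = X|_{U \setminus \bigcup_{i \in J} S_i}$. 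The disconnectedness of $X|_S$ forces every cell of $X$ whose vertex set lies entirely in $S$ to be contained in a single $X|_{S_i}$, so any cell of $X|_U$ spanning distinct components must involve vertices outside $S$. A Mayer--Vietoris (or nerve-lemma) argument applied iteratively to the cover $\{X|_{U_i}\}$, using the acyclicity of all relevant intersections, then yields acyclicity of each $X|_{U_i}$ and, more generally, of every $X|_{[n] \setminus W'}$.

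The hard part is exactly this Mayer--Vietoris step in a general regular cell complex: a cell of $X|_U$ may have vertices in several $S_i$'s together with vertices in $U \setminus S$, and such ``bridging'' cells lie in $X|_U$ but not in any single $X|_{U_i}$. Controlling their contribution requires either a careful spectral sequence argument or an explicit collapse using acyclicity of $X|_{U \setminus S}$. In the tree case of Section \ref{floystad}, however, the obstacle evaporates: every cell of $X$ is an edge with just two vertices, so disconnectedness of $X|_S$ immediately forbids any edge of $X|_U$ from having one endpoint in $S_i$ and another in $S_j$ for $i \neq j$. Thus $X|_U = X|_{U_1} \cup \cdots \cup X|_{U_k}$ literally, and standard Mayer--Vietoris combined with acyclicity of $X|_U$ and $X|_{U \setminus S}$ delivers the acyclicity of each $X|_{U_i}$ at once, completing the proof in the setting of primary interest.
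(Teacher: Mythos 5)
First, a point of reference: the paper does not prove Lemma \ref{CMandMax} at all --- it is imported verbatim as Lemma 1.13 of \cite{Floystad1} --- so there is no in-paper argument to measure you against, only the cited source. Your overall architecture (argue the contrapositive, replace a disconnected $S$ by the vertex sets $S_1,\dots,S_k$ of the components of $X|_S$, and check that the refined family still satisfies Definition \ref{CMandRes}) is the natural one and matches the spirit of Fl\o ystad's argument. Your verifications of condition (1), condition (3), the strictness of the refinement, and the passage to the reduction are essentially fine (with the small remark that a covering $d$-tuple in $\mathcal{F}'$ containing several of the $S_i$ collapses to fewer than $d$ distinct sets of $\mathcal{F}$ after substituting $S$, so you need to pad it back up to $d$ distinct sets --- harmless, but worth a sentence).

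The genuine gap is the one you yourself flag: condition (2) for the refined family. Your Mayer--Vietoris scheme needs the identity $X|_U = X|_{U_1}\cup\cdots\cup X|_{U_k}$, equivalently that no cell of $X|_U$ has vertices in two distinct components $S_i, S_j$. Disconnectedness of $X|_S$ only forbids such a cell when \emph{all} of its vertices lie in $S$; a higher-dimensional cell with vertices in $S_1$, in $S_2$, \emph{and} in $U\smallsetminus S$ is perfectly consistent with $X|_S$ being disconnected, lies in $X|_U$, and lies in no $X|_{U_i}$ for $i\in\{1,2\}$, so for $k=2$ your cover is not a cover. Moreover, even where the cover exists, the pairwise intersections $X|_{U\smallsetminus(S_i\cup S_j)}$ are complements of unions taken from $\mathcal{F}'$, not from $\mathcal{F}$, so their acyclicity is part of what is being proved and cannot be fed into a nerve-lemma argument without a careful induction. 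You acknowledge both issues and resolve neither; writing ``requires either a careful spectral sequence argument or an explicit collapse'' is naming the missing step, not supplying it. As a proof of the lemma as stated (for an arbitrary regular cell complex $X$) the proposal is therefore incomplete. What you do establish cleanly is the one-dimensional case: when $X$ is a tree every cell has at most two vertices, the bridging cells cannot occur, and the iterated Mayer--Vietoris argument (splitting off one $Y_i = X|_{(U\smallsetminus S)\cup S_i}$ at a time over the common acyclic intersection $X|_{U\smallsetminus S}$) closes. Since Section \ref{floystad} only ever applies the lemma to trees, your argument suffices for every use made of it in this paper, but it does not prove Lemma \ref{CMandMax} in the generality in which it is stated; for that one must either supply the missing homological argument or defer, as the paper does, to \cite{Floystad1}.
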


 The families $\mathcal{F}$, which are used to describe ideals whose resolutions are supported on cell complexes in  \cite{Floystad1},  can be viewed as subsets of the lcm lattices of these monomial ideals.  What follows is a description of the connections between the constructions appearing in \cite{Floystad1} and  \cite{phan}. 

\subsection{Dictionary between labeling regular cell complexes and coordinatizing lattices}\label{phanTies}

We begin by addressing how to translate between the families $\mathcal{F}$ in  \cite{Floystad1} and  the lcm lattice associated to the ideal they represent.  
First consider the sets $V_t = \{i \,|\,  x_t \mbox{ divides } m_i \}$.  Let $M$ be the squarefree monomial ideal in $CM(n,c)$ corresponding  to a family $\mathcal{F} = \{V_1,..,V_s\}$.  
For each variable $x_t$,  there is a point in the deficit labeling, $p \in P = \LCM(M)$,  
such that $p$ is labeled with the variable $x_t$.  
So by the construction in \cite{phan},  $x_t$ will divide precisely the monomials that correspond to 
 $V_t = \lfloor{p}\rfloor^c \cap \atoms (P)$, where
 $\lfloor{p}\rfloor^c$  is the complement of the set of elements in $P$ which are less than
 or equal to $p$.  So reversing this, one can determine which element $p$ must
 be labeled with $x_t$, by taking the complement of $V_t$ in the set
 $\atoms (P)$, and then taking the join of these elements.  By
 definition, this labeling should yield the original ideal $M$.

\subsection{Codimension Two Cohen-Macaulay monomial \\ideals}

The Auslander-Buchsbaum formula makes it clear that the projective dimension of Cohen-Macaulay monomial ideals of codimension two must be two.  In terms of cellular resolutions, this implies their resolutions are supported on trees.  For this special case,  \cite{Floystad1} gives a very specific construction which associates to every tree $T$ a maximal monomial ideal in $CM(T)$ 
using any
given orientations of the edges of $T$.

The construction is as follows. Assign to every edge $e_i$ in $T$ two variables $x_i$ and $y_i$. 
 Delete the edge $e_i$ to produce two connected components of $T$,  $T_{i,1}$ and $T_{i,2}$.  The monomial associated to each vertex $v \in T$ is \[ m_v = (\prod_{\{i \,|\, v \in T_{i,1}\}} x_i ) (\prod_{\{i \,|\, v \in T_{i,2}\}} y_i). \]  
 The squarefree monomial ideal $M_T = (m_{v_1},\dots, m_{v_{n+1}})$ is maximal Cohen-Macaulay and its minimal resolution is supported on $T$.

\begin{example}\label{treeEx}
For each edge $e_{i}$ between two verticies $v_j$ and $v_k$, assume $j
<k$.  Let $T_{i,1}$ be the component of $T$ containing vertex $j$,
 and $T_{i,2}$ be the component of $T$ containing vertex $k$.
With this convention, the ideal obtained via the construction in  \cite{Floystad1}
to the tree in Figure \ref{treeFloystad} is   \[M_T =
(x_1 y_2 x_3 x_4,    y_1 x_2 x_3 x_4,    y_1y_2 x_3 x_4,     y_1 y_2
y_3 x_4,     y_1 y_2 y_3 y_4)\] in the ring $
k[x_1,x_2,x_3,x_4,y_1,y_2,y_3,y_4]$.
  
\begin{figure}\label{treeExVisual}
\center
\includegraphics[scale=0.5]{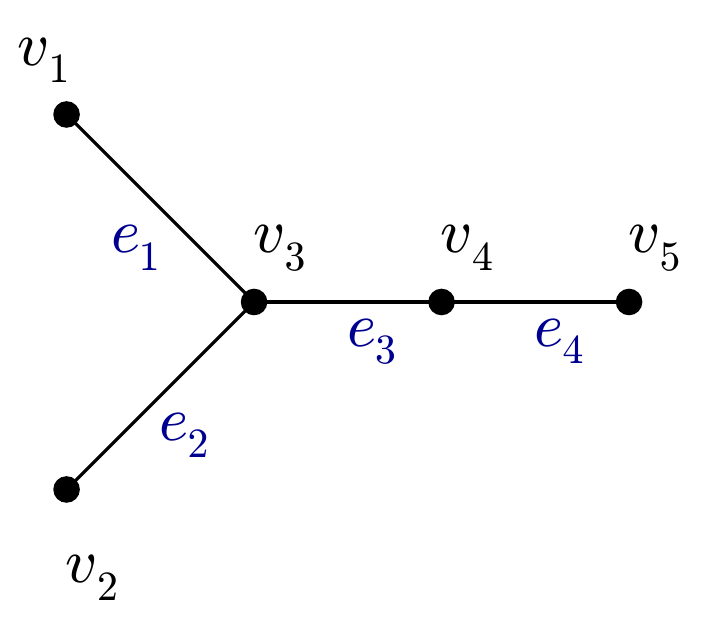}
\caption{The tree T described in example \ref{treeEx}.}\label{treeFloystad}
\end{figure}
\end{example}

We will show that for an appropriate choice of finite atomic lattice $P$, this construction coincides with the minimal squarefree coordinatization in \cite{phan}.  Given a tree $T$ with $n$ edges and $n+1$ vertices, define $P_T$ to be the set of all subtrees of $T$ ordered by inclusion (note that we include vertices and the empty set as subtrees).     

\begin{lemma}\label{PTlattice}
A poset $P_T$ defined as above is an element of $\mathcal{L}(n+1)$,
where $\mathcal{L}(n)$ is the set of all finite atomic lattices with
$n$ ordered atoms.
\end{lemma}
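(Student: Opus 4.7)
The plan is to verify the axioms of a finite atomic lattice for $P_T$ directly, exploiting the defining property of a tree: between any two vertices there is a unique path. First I would identify the extremal elements: $\hat{1} = T$ itself and $\hat{0} = \emptyset$. The atoms are exactly the one-vertex subtrees, of which there are $n+1$; fixing any enumeration $v_1,\dots,v_{n+1}$ of the vertices of $T$ gives the ordered atom set required by $\mathcal{L}(n+1)$.

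Next I would verify that meets exist in $P_T$ by setting $T_1 \wedge T_2 := T_1 \cap T_2$. The key point is to show this intersection is itself a subtree (or empty): if $u,v \in T_1 \cap T_2$, then by connectedness of each $T_i$ the unique $u$--$v$ path in $T$ lies inside both $T_1$ and $T_2$, hence in their intersection. This immediately makes $T_1\cap T_2$ a subtree and obviously the greatest lower bound in $P_T$.

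For joins I would define $T_1 \vee T_2$ to be the union of $T_1$, $T_2$, and the unique minimal path in $T$ joining them (with the convention that the path is empty when $T_1\cap T_2\neq\emptyset$, and the whole join equals $T_j$ when $T_i=\emptyset$). This is a subtree by construction. To see it is the least upper bound, note that any subtree $T'$ containing both $T_1$ and $T_2$ must contain a vertex from each, and hence by connectedness must contain the unique path between them; so $T'\supseteq T_1\vee T_2$.

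Finally, for the atomic property, I would show that any nonempty $T' \in P_T$ equals the join $\bigvee_{v\in T'}\{v\}$ of its vertex-atoms. The join is contained in $T'$ because $T'$ is itself an upper bound of those singletons; conversely, the join certainly contains every vertex of $T'$ and, by connectedness of $T'$, the path between any two of its vertices inside $T$ is exactly the corresponding path inside $T'$, so the join recovers all edges as well. Thus $T'$ is atomic, and combined with the previous steps this places $P_T$ in $\mathcal{L}(n+1)$. The only real obstacle is the verification that meets and joins stay inside $P_T$, and both reduce to the unique-path property of trees, so the proof is almost purely structural.
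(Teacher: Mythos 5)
Your proof is correct, and it verifies the same essential facts as the paper (meets are intersections of subtrees, the atoms are the $n+1$ single-vertex subtrees, and every subtree is the join of its vertices), but it takes a slightly more hands-on route for the existence of joins. The paper only checks that $P_T$ is a finite meet-semilattice with a maximal element $T$ and then invokes the standard fact (Proposition 3.3.1 in Stanley) that such a poset is automatically a lattice, since the join of $a$ and $b$ can be taken to be the meet of all their common upper bounds; you instead construct $T_1 \vee T_2$ explicitly as $T_1 \cup T_2$ together with the unique minimal connecting path, and verify directly that it is the least upper bound. Your version is more self-contained and makes the lattice operations concrete (which is also what powers your atomicity argument), while the paper's is shorter at the cost of citing a general lattice-theoretic lemma. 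You also supply a detail the paper leaves implicit, namely the unique-path argument showing that the intersection of two subtrees is again connected. Both arguments are sound.
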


\begin{proof}
We will show that $P_T$ is a meet-semilattice with a maximal element
making it a finite lattice by Proposition 3.3.1 in \cite{Stanley}.   To show that $P_T$ is a meet-semilattice, we must show that for every pair of elements $a, b \in P_T$ there exists a meet or least upper bound.  Since $a$ and $b$ are subtrees of $T$, define $a \wedge b$ to be the intersection of $a$ and $b$.  Since $\emptyset \in P_T$, $a \cap b$ will be a subtree of $T$, so $P_T$ is a meet-semilattice.

It remains to show that $P_T$ is atomic with $n+1$ atoms.  This follows from the fact that $T$ has $n+1$ vertices and every subtree can be realized as an induced graph on a subset of the vertices.  
\end{proof}

The following proposition demonstrates that any coordinatization of the lattice $P_T$ defined above will yield a monomial ideal in $Mon(T)$, which is the set of monomial ideals whose resolution is supported on $T$.  

\begin{lemma}
The minimal resolution of any coordinatization of $P_T$ is supported on $T$.  
\end{lemma}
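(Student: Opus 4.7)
The plan is to apply the Bayer-Sturmfels criterion from \cite{BS} directly to $T$. Given a coordinatization $\mathcal{M}$ of $P_T$ producing the ideal $M_\mathcal{M}$, label each vertex $v$ of $T$ by the generator $x(a_v)$ and each edge $\{v,w\}$ by $\lcm(x(a_v), x(a_w))$. Since $\mathcal{M}$ is a coordinatization, the lcm lattice of $M_\mathcal{M}$ is isomorphic to $P_T$, so $\lcm(x(a_v), x(a_w)) = l_{a_v \vee a_w}$; moreover, when $v$ and $w$ are adjacent in $T$, their join in $P_T$ is precisely the two-vertex subtree $\{v,w\}$.

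First I would verify the acyclicity condition. Fix $p \in P_T$. A vertex $v$ of $T$ belongs to $T_{\preceq l_p}$ if and only if $a_v \leq p$, i.e., $v$ is a vertex of the subtree $p$. An edge $\{v,w\}$ of $T$ belongs to $T_{\preceq l_p}$ if and only if $\{v,w\} \leq p$ in $P_T$, which happens exactly when the edge lies in $p$. Therefore $T_{\preceq l_p}$ is $p$ viewed as a subcomplex of $T$, and is contractible because $p$ is itself a subtree. Since for any monomial $m$ one has $T_{\preceq m} = T_{\preceq l_p}$ where $l_p$ is the join of all lcm-lattice elements dividing $m$, this verifies the Bayer-Sturmfels criterion and shows that $T$ supports a free resolution of $M_\mathcal{M}$.

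To conclude, I would argue minimality by observing that distinct cells of $T$ carry distinct monomial labels: vertices of $T$ correspond to distinct atoms of $P_T$, edges of $T$ correspond to distinct two-vertex subtrees, and an edge label strictly exceeds either of its endpoint labels since $\{v,w\} > a_v, a_w$ in $P_T$. Hence the label of each cell strictly divides the label of any proper containing cell, no entry of the cellular differential on $T$ is a unit, and the supported resolution is minimal.

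The only real subtlety is the identification of the join in $P_T$ of two adjacent atoms with the corresponding two-vertex subtree; this holds because in a tree the unique minimal subtree containing two adjacent vertices is the edge between them. The remaining arguments are routine applications of the coordinatization formalism of Section \ref{prelims} together with the standard cellular-resolution criterion.
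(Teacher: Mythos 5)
Your proof is correct and follows essentially the same route as the paper: both invoke the Bayer--Sturmfels criterion and identify $T_{\leqslant p}$ with the subtree corresponding to $p\in P_T$, which is acyclic. You additionally spell out the identification of edge labels with joins of adjacent atoms and verify the strict-divisibility condition for minimality, details the paper's proof leaves implicit.
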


\begin{proof}
Since $P_T$ will be the
lcm lattice for any coordinatization of $P_T$, by  Proposition 1.2 in \cite{BS}, it is enough to show that
$T_{\leqslant p}$ is acyclic for each $p \in P_T$. 

Each $p \in P_T$ corresponds to a subtree of $P_T$ ordered by
inclusion, so by construction $T_{\leqslant p}$ is  the subtree corresponding to $p$.  Since they are themselves trees, each subtree is acyclic, so $T$ supports the minimal resolution of any coordinatization of $P_T$.
\end{proof}

Finally we show that  minimal squarefree coordinatization of $P_T$ in \cite{phan}  always agrees with the ideals  constructed in  \cite{Floystad1}.

\begin{theorem}\label{maxTreeIdeal}
If $\mathcal{M}$ is the minimal squarefree coordinatization of $P_T$, then $M_{\mathcal{M}} \cong M_T$  
\end{theorem}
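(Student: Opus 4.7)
The plan is to identify the meet-irreducibles of $P_T - \{\hat{0},\hat{1}\}$ with exactly the $2n$ subtrees $T_{i,j}$ obtained by deleting a single edge from $T$, and then to check that the minimal squarefree coordinatization labeling these meet-irreducibles reproduces Fl\o ystad's monomials $m_v$.

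First, I would characterize meet-irreducibility in $P_T$. Since $P_T$ is finite, a non-maximal element $p$ is meet-irreducible if and only if it is covered by a unique element of $P_T$. A cover of a subtree $p$ is obtained by appending a single edge of $T$ that is incident to $p$ but not contained in $p$, together with any new endpoint. Hence $p \in P_T - \{\hat{0},\hat{1}\}$ is meet-irreducible if and only if exactly one edge of $T$ has an endpoint in $V(p)$ but does not lie in $E(p)$.

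Next, I would show that the meet-irreducibles of $P_T - \{\hat{0},\hat{1}\}$ are precisely the subtrees $T_{i,j}$. For the forward direction, each $T_{i,j}$ is a connected component of $T \smsm e_i$, so its unique external edge is $e_i$. For the converse, suppose $p$ has a unique external edge $e$, and let $C$ be the component of $T \smsm e$ containing the endpoint of $e$ that lies in $p$. A tree-connectivity argument yields $p = C$: connectedness of $p$ together with its avoidance of $e$ gives $p \subseteq C$, while any vertex of $C \smsm p$ would be joined to $p$ by a path inside $C$ whose first ``boundary'' edge would be a second external edge of $p$, a contradiction.

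With this characterization in hand, the minimal squarefree coordinatization $\mathcal{M}$ labels the $2n$ meet-irreducibles with $2n$ distinct variables; I would choose $T_{i,1} \mapsto y_i$ and $T_{i,2} \mapsto x_i$. By equation (\ref{IdealDef}), the monomial assigned to an atom $v$ is the product of labels on labeled elements outside $\lceil v \rceil$, namely on the meet-irreducibles $T_{i,j}$ not containing $v$. Since $V(T_{i,1})$ and $V(T_{i,2})$ partition $V(T)$, exactly one of $T_{i,1}, T_{i,2}$ contains $v$ for each $i$, so the product picks up exactly one variable per edge:
\[ x(v) \;=\; \prod_{\{i \,|\, v \in T_{i,1}\}} x_i \cdot \prod_{\{i \,|\, v \in T_{i,2}\}} y_i \;=\; m_v. \]
Hence $M_{\mathcal{M}} = M_T$ (the alternative choice $T_{i,1} \mapsto x_i$, $T_{i,2} \mapsto y_i$ corresponds precisely to the orientation freedom in Fl\o ystad's construction). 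The main obstacle is the converse half of the meet-irreducible characterization: showing that having a single external edge forces $p$ to be a full component of $T \smsm e$ rather than a proper subtree thereof. Everything else is a direct unwinding of the definitions of $\mi(P_T)$, Phan's construction, and equation (\ref{IdealDef}).
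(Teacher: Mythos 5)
Your proposal is correct and follows essentially the same route as the paper: identify the meet-irreducibles of $P_T$ with the $2n$ components $T_{i,j}$ of the edge-deleted trees, label them so that the complement-of-filter product in equation (\ref{IdealDef}) reproduces $m_v$, and conclude $M_{\mathcal{M}} = M_T$ up to renaming variables. Your explicit argument for the converse half of the meet-irreducible characterization (a subtree with a unique external edge must be a full component of $T \smallsetminus e$) supplies a detail the paper passes over quickly, but the overall strategy is identical.
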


\begin{proof}
Recall that in the construction of $M_T$ we assigned a variable to each subtree $T_{i,1}$ or $T_{i,2}$ of $T$ obtained by deleting an edge $e_i$ of $T$.  So each component was assigned a variable, and each vertex $v$ was assigned a monomial (the product of the variables corresponding to the trees $T_{i,j}$ containing $v$).  

We must show that the trees $T_{i,j}$ obtained by deleting edges are precisely the meet-irreducibles of $P_T$ and explain how to coordinateize $P_T$ to obtain $M_T$.

Clearly, the meet-irreducibles of $P_T$ will be the subtrees of $T' \subset T$ that have only one subtree $T'' \subset T$ containing them which satisfy  
\begin{equation}\label{treeCondition}
 |\{e_i \in T'\}| +1  = |\{e_i \in T''\}|
 \end{equation}
  where the $e_i$ are the edges of a tree.

If $T'$ is obtained as above by deleting an edge $e_i$ (i.e. $T' = T_{i,1}$), then the only subtree $T''$ satisfying equation \ref{treeCondition} is $T'' = T' \cup e_i$ (i.e. $T''$ is obtained by adding edge $e_i$ to $T'$).  Since the only other edges one could add are in the other connected component,  to add an edge we would be forced to add  $e_i$ as well, which would violate equation \ref{treeCondition}.  So the meet-irreducibles are precisely the subtrees $T_{i,j}$ obtained by deleting edge $e_i$.

As stated above, we want to use a minimal squarefree coordinatization of $P_T$.
If we place variables carefully, it will be clear that $M_{P_{T}, \mathcal{M}} \cong M_T$.  Recall that for $M_T$ the variables $x_i$ were assigned to the trees $T_{i,1}$, and $y_i$'s were assigned to the trees $T_{i,2}$.  Moreover, note that if $v \in T_{i,1}$ it is necessarily not in $T_{i,2}$ and vice versa.  
So  the trees $T_{i,1}, T_{i,2}$ partition the vertices into two disjoint sets.  The monomial label for the construction of $M_T$ assigns to each vertex the product of the variables corresponding to the subtrees containing $v$.  In lattice language, the subtrees containing $v$ will be in $\lceil {a_v} \rceil$ where $a_v$ is the atom corresponding to the vertex $v$.  
For our coordinatization construction this is not what we want, since we take the product over the complement of the filter.  However, the complement of the filter consists precisely of the subtrees not containing $v$, so we can make the following coordinatization.

Let $\mathcal{M}$ label $P_T$ as follows.  If $p \in P_T$  is a meet-irreducible,  corresponding to a $T_{i,1}$, denote it as $p_{i1}$ and label it with $y_i$, similarly if $p$ corresponds to a $T_{i,2}$ denote it as $p_{i2}$ label it with an $x_i$.  We see that:  

\begin{equation*}
\begin{aligned}
x(a_v) &= \prod_{ p \in \lceil{a_v}\rceil ^ c} m_p \\
&= (\prod_{p_{i1} \in \lceil{a_v}\rceil ^ c} y_i )( \prod_{p_{i2} \in \lceil{a_v}\rceil^ c} x_i)\\ 
&= (\prod_{\{i \,|\, v \in T_{i,2}\}} y_i) (\prod_{\{i \,|\, v \in T_{i,1}\}} x_i )\\
&= m_v.
\end{aligned}
\end{equation*}

\end{proof}

\begin{example} \label{treeExPart2}
Figure \ref{PosetwithLabel} depicts the lattice $P_T$ and the minimal squarefree
coordinatization used in the proof of Theorem \ref{maxTreeIdeal} for
the tree in Figure \ref{treeFloystad}, as described in Example \ref{treeEx}.

\end{example}

\begin{figure}[H]
\center
\begin{tikzpicture}[scale=1, vertices/.style={draw, circle}]
             \node [vertices] (0) at (-0+0,0){};
             \node [vertices] (1) at (-3+0,1.33333){$y_1$};
             \node [vertices] (2) at (-3+1.5,1.33333){$y_2$};
             \node [vertices] (3) at (-3+3,1.33333){};
             \node [vertices] (4) at (-3+4.5,1.33333){};
             \node [vertices] (5) at (-3+6,1.33333){$x_4$};
             \node [vertices] (6) at (-2.25+0,2.66667){};
             \node [vertices] (7) at (-2.25+1.5,2.66667){};
             \node [vertices] (8) at (-2.25+3,2.66667){};
             \node [vertices] (9) at (-2.25+4.5,2.66667){$x_3$};
             \node [vertices] (10) at (-2.25+0,4){$y_3$};
             \node [vertices] (12) at (-2.25+1.5,4){};
             \node [vertices] (11) at (-2.25+3,4){};
             \node [vertices] (13) at (-2.25+4.5,4){};
             \node [vertices] (14) at (-1.5+0,5.33333){$y_4$};
             \node [vertices] (16) at (-1.5+1.5,5.33333){$x_2$};
             \node [vertices] (15) at (-1.5+3,5.33333){$x_1$};
             \node [vertices] (17) at (-0+0,6.66667){};
     \foreach \to/\from in {0/1, 0/2, 0/3, 0/4, 0/5, 1/6, 2/7, 3/8, 3/6, 3/7, 4/8, 4/9, 5/9, 6/10, 6/11, 7/12, 7/10, 8/12, 8/13, 8/11, 9/13, 10/14, 11/14, 11/15, 12/16, 12/14, 13/16, 13/15, 14/17, 15/17, 16/17}
     \draw [-] (\to)--(\from);
     \end{tikzpicture}
\caption{$P_T$ with the minimal squarefree labeling outlined in example \ref{treeExPart2}.} \label{PosetwithLabel}
\end{figure}
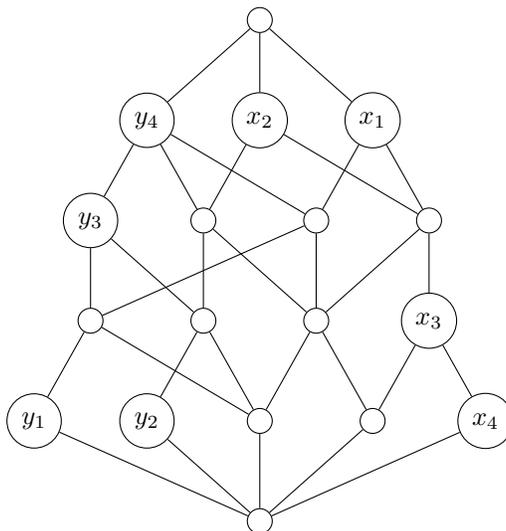

\subsection{Connection to Betti Strata}
  The sets $\mathcal{L}(n)$, introduced in
Lemma \ref{PTlattice},  have a rich structure studied
in \cite{mapes}.  Here we will highlight the important features 
 necessary for our discussion.  The most important
is that $\mathcal{L}(n)$ is itself a
finite atomic lattice (as shown in Theorem 4.2 of \cite{phan}) under the partial
order given by $P > Q$ if there is a join preserving map which is a
bijection on atoms from $P$ to $Q$.  Covering relations in
$\mathcal{L}(n)$ can be described as $P$ covers $Q$ if as sets $P= Q
\cup \{p\}$ where $p$ is a new element introduced to $Q$ with 
appropriate relations (Proposition 4.2 in \cite{mapes}). 
Since each element in $\mathcal{L}(n)$ is a
finite atomic lattice, and therefore can be assocated to a monomial
ideal, we can talk about the Betti numbers of these lattices as being
the Betti numbers of the associated monomial ideals.  Theorem 3.3 in
\cite{GPW}  guarantees that $\mathcal{L}(n)$ is stratified by
 total Betti numbers.  Understanding the boundaries
of these Betti strata in $\mathcal{L}(n)$ can provide insights for
how to move from one monomial ideal $M$ to another, whose minimal
resolution is easy to determine, in a way that  produces a
minimal resolution for $M$.  

The following proposition is a special case of Conjecture
\ref{maxEquals} in the case of trees.

\begin{proposition}
If $P$ is a lattice in $\mathcal{L}(n+1)$ satisfying $P > P_T$ then
the total Betti numbers of $P$ are greater than that of $P_T$
(i.e. $P$ is in a different Betti stratum than $P_T$).  
\end{proposition}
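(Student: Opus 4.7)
The plan is to reduce to the cover case $P \gtrdot P_T$ in $\mathcal{L}(n+1)$, write $P = P_T \cup \{q\}$ for some new element $q$, and locate a specific positive contribution to the total Betti number coming from $q$. By Theorem 3.3 of \cite{GPW} applied to the natural surjection $P \to P_T$, the total Betti numbers are weakly monotone, so it suffices to establish strict inequality at a single cover; chain-concatenation then handles any $P > P_T$ by induction on the length of a saturated chain from $P$ to $P_T$.

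The key observation will be that the atoms of $P$ lying below $q$, say $S \subseteq V(T)$, must be disconnected in $T$. In a finite atomic lattice every element equals the join of the atoms beneath it, so two elements with the same atom-set must coincide. If $S$ were the vertex set of some subtree $\sigma$ of $T$, then $\sigma \in P_T$ would also have atom-set $S$ in $P$, forcing $q = \sigma$ and contradicting that $q$ is genuinely new. Hence $S$ splits into $k \geq 2$ components in $T$, and $(\hat{0}, q)_P$ is precisely the poset of subtrees of $T$ whose vertex set is contained in $S$, which decomposes as a disjoint union of the sub-lattices of each component. This poset is disconnected, so $\tilde{H}_0((\hat{0}, q)_P) \neq 0$, giving $\beta_{1,q}(M_P) \geq k-1 \geq 1$ by the Gasharov--Peeva--Welker graded formula.

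To conclude strict inequality on totals, I control the contributions at elements $\tau \in P_T$ with $\tau \geq \tau^* := \bigvee_{P_T} S$. Since the Steiner closure of a disconnected vertex set in a tree must contain interior vertices, $\tau^*$ has at least three vertices, hence so does any $\tau \geq \tau^*$. But any coordinatization of $P_T$ has its minimal resolution supported on the one-dimensional tree $T$, so $\beta_{i,\tau}(M_{P_T})=0$ whenever $\tau$ is neither an atom nor an edge; equivalently $(\hat{0}, \tau)_{P_T}$ has trivial reduced homology for every $\tau \geq \tau^*$. Expanding $\sum_i \beta_i(M_P) - \sum_i \beta_i(M_{P_T})$ as the contribution of $q$ plus the differences at $\tau \geq \tau^*$, the contribution of $q$ is at least one, while each of the remaining summands is non-negative because its subtracted term vanishes. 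Hence the overall difference is strictly positive.

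The step I expect to be most delicate is the atomicity argument: one needs that the covering relation in $\mathcal{L}(n+1)$ from \cite{mapes} genuinely produces a lattice $P$ in which the atoms beneath $q$ form the set $S$ used above, so that the identification of $(\hat{0}, q)_P$ as a poset of subtrees is valid. Once Proposition~4.2 of \cite{mapes} is invoked to this effect, the disconnectedness of $S$ together with the cellular-resolution hypothesis for $P_T$ delivers the result.
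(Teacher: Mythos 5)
Your proposal is correct and follows essentially the same route as the paper: reduce to a cover $P = P_T \cup \{q\}$ using the GPW monotonicity of total Betti numbers, observe that the open interval $(\hat{0},q)$ is a disjoint union of posets of subtrees (since the atom set $S$ below $q$ cannot be connected, else $q$ would already be a subtree in $P_T$), so $\tilde{H}_0 \neq 0$ and $q$ contributes a new Betti number, while the contributions of $P_T$ at atoms and edges are unchanged. Your treatment of the multidegrees $\tau \geq \tau^*$ is somewhat more explicit than the paper's brief remark that the old nonzero contributions are ``undisturbed,'' but the underlying argument is the same.
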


Before giving the proof, we introduce the following useful formulas from \cite{GPW}.
  One can compute the ``multigraded'' Betti numbers
for monomial ideals (or equivallently finite atomic lattices)
using intervals in the corresponding lcm lattice $P$.  Since the multidegree of a monomial
will always correspond to an element in the finite atomic lattice, we
abuse notation and say that the ``multidegree'' is an element $p$ in the
lattice $P$.  So the  computations  for graded and total Betti numbers are respectively as
follows:  $$b_{i,p} = \tilde{H}_{i-2}(\Gamma(\hat{0},p),k)$$ and  $$b_i = \sum_{p \in P}
\tilde{H}_{i-2}(\Gamma(\hat{0},p),k).$$

\begin{proof}
We need only consider the lattices $P$ in $\mathcal{L}(n+1)$
which cover $P_T$.  If we can show that for each of these lattices the
total Betti numbers are greater than that of $P_T$, we are done.
We know that these lattices $P$ only differ from the lattice
$P_T$ by one element, denoted $p$, so we need only consider how that one element
affects the Betti number computations.

First let us note that since
$T$ supports the minimal free resolution of any monomial ideal with
$P_T$ as the lcm lattice, the only elements
in $P_T$  for which $b_{i,p}$
are nonzero are the atoms and the elements covering the atoms.  

Now in $P$, which covers $P_T$, we know that all the elements $q$ from
$P_T$ where $b_{i,q}$ were nonzero will continue to be non zero since
they correspond to the face poset of $T$ which is simplicial, so they
are undisturbed by the addition of the element $p$.  This means the total
Betti numbers of $P$ are at least the total Betti numbers of $P_T$.
An obvious candidate for where a new non-zero Betti number might exist
is the element $p$ which has been added to $P_T$ to create $P$.  

Consider $\tilde{H}_0(\Gamma(\hat{0}, p),k)$.  If this is zero,
the order complex of the interval $(\hat{0}, p)$ in
$P$ is contractible.  However, since $p$ is the element we added to $P_T$,
all of the elements in $(\hat{0},p)$ correspond to subtrees of
$T$. Therefore, $\Gamma(\hat{0}, p)$ will be the union of the subtrees of
$T$ corresponding the elements covered by $p$.  If this is
contractible, then it should also be a subtree of $T$, and $p$
would already be an element of $P_T$.  So, $\Gamma(\hat{0}, p)$ is
not contractible, and $\tilde{H}_0(\Gamma(\hat{0}, p),k)$ is nonzero.
Meaning that the total Betti numbers (namely $b_2$) of $P$ are greater
than those of  $P_T$.           

\end{proof}

We conjecture that this should be true more generally.  Moreover, if
this conjecture is true, it offers an alternate (and
perhaps more useful) description of elements on the boundary of these
Betti strata.

\begin{conjecture}\label{maxEquals}
Let $X$ be a regular cell complex. The lcm lattice $P$ of a
maximal monomial ideal $M \in CM_*(X)$ satisfies the property that if
$Q > P$ in $\mathcal{L}(n)$, then the minimal resolution of $Q$ has
total Betti numbers greater than that of $P$.  In other words, $P$ is
maximal in its Betti stratum.  
\end{conjecture}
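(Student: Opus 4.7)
The plan is to mimic the strategy that worked in the tree case and lift it to arbitrary regular cell complexes, using the dictionary between families $\mathcal{F}$ and coordinatizations developed in Section \ref{phanTies}. First, I would reduce to the case where $Q$ covers $P$ in $\mathcal{L}(n)$, since if the total Betti numbers strictly increase at every cover, induction along any chain from $P$ up to $Q$ gives the result. By Proposition 4.2 of \cite{mapes}, such a $Q$ differs from $P$ by the insertion of a single new element $p$, with its cover/covered relations determined by its join-expressions in terms of existing atoms.

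Next, I would analyze what happens to the total Betti number formula $b_i = \sum_{q \in P} \tilde{H}_{i-2}(\Gamma(\hat{0},q),k)$ when passing from $P$ to $Q$. The key observation is that inserting $p$ cannot change $\Gamma(\hat{0},q)$ for any $q \neq p$ that survives in $Q$, because the atoms of $\lfloor q \rfloor$ and their join relations below $q$ are unchanged; hence every contribution $\tilde{H}_{i-2}(\Gamma(\hat{0},q),k)$ present in $P$ persists in $Q$. So it suffices to show that the single new element $p$ contributes a nonzero reduced homology class to some $\tilde{H}_{i-2}(\Gamma(\hat{0},p),k)$.

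The heart of the argument, and the main obstacle, is precisely the analog of the non-contractibility step in the tree proposition. Via the dictionary in Section \ref{phanTies}, the elements of $P$ below $p$ correspond to certain subcomplexes of $X$ that are unions of open cells complementary to the sets $V_t$ of the maximal family $\mathcal{F}$. Since $M$ is maximal in $CM_*(X)$, $\mathcal{F}$ is reduced and maximal for the refinement order (Definition \ref{MaxFam}), and condition (2) of Definition \ref{CMandRes} together with Lemma \ref{CMandMax} controls the connectivity of these restrictions. The plan is to show that if $\Gamma(\hat{0},p)$ were acyclic in all reduced degrees, then the subset of atoms below $p$ together with its join-structure would encode a refinement of $\mathcal{F}$ still satisfying conditions (1)--(3) of Definition \ref{CMandRes}, contradicting the maximality of $\mathcal{F}$. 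This forces $\tilde{H}_{i-2}(\Gamma(\hat{0},p),k) \neq 0$ for some $i$, producing a new nonzero summand in $b_i$ and therefore a strict increase in total Betti numbers.

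The delicate step will be turning the acyclicity of $\Gamma(\hat{0},p)$ into an explicit refinement of $\mathcal{F}$: in the tree case the argument was immediate because any contractible union of subtrees is a subtree, but for higher-dimensional $X$ one must use condition (2) and (3) carefully to construct the new subset (or collection of subsets) of $[n]$ that would witness the refinement. I anticipate that the cross-cut identification $\Delta(\hat{0},p) \simeq \Gamma(\hat{0},p)$ from \cite{bjorner}, combined with the cellular resolution criterion of \cite{BS} applied to the candidate refined family, is the right technical toolkit to push the tree proof through in full generality.
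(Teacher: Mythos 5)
This statement is stated as a \emph{conjecture} in the paper: the authors prove it only in the special case $X=T$ a tree (the proposition immediately preceding it), and explicitly leave the general case open. Your proposal is therefore not being measured against an existing proof, and as written it is an outline rather than a proof; the two places where it would have to do real work are exactly the places it defers.

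First, your claim that inserting the new element $p$ ``cannot change $\Gamma(\hat{0},q)$ for any $q \neq p$'' is not correct in general. If $q > p$ in $Q$, the open interval $(\hat{0},q)$ acquires the new element $p$, and a subset of atoms whose join in $P$ was $q$ itself may now have join $p < q$ in $Q$; this changes the cross-cut complex and can alter $b_{i,q}$. In the tree case the paper avoids this by observing that the only nonzero contributions in $P_T$ sit at atoms and at elements covering atoms, which are too low to be disturbed --- a special feature of $P_T$ with no analogue for general $X$. What you actually need here is the weak monotonicity $b_i(P) \le b_i(Q)$ for $Q > P$, which does follow from Theorem 3.3 of \cite{GPW}; you should cite that rather than argue interval-by-interval. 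Second, and more seriously, the heart of your argument --- that acyclicity of $\Gamma(\hat{0},p)$ would produce a refinement of $\mathcal{F}$ inside $CM_*(X)$, contradicting maximality --- is asserted as a plan (``I anticipate that \dots is the right technical toolkit'') rather than carried out. Note the logical gap you must close: Fl{\o}ystad's maximality (Definition \ref{MaxFam}) only forbids refinements of $\mathcal{F}$ that remain in $CM_*(X)$, whereas the conjecture quantifies over \emph{all} $Q > P$ in $\mathcal{L}(n)$. You would need to show that any cover $Q$ of $P$ with equal total Betti numbers still has its minimal resolution supported on $X$ and arises from such a refinement; in the tree case this is immediate because a contractible union of subtrees is a subtree, but for higher-dimensional $X$ nothing of the sort is established, and conditions (2)--(3) of Definition \ref{CMandRes} plus Lemma \ref{CMandMax} do not obviously supply it. Until that bridge is built, the proposal does not resolve the conjecture.
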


\end{document}